\theoremstyle{plain}
\newtheorem{theorem}{Theorem}[section]
\newtheorem{proposition}[theorem]{Proposition}
\newtheorem{corollary}[theorem]{Corollary}
\theoremstyle{definition}
\newtheorem{definition}{Definition}[section]
\theoremstyle{remark}
\newtheorem{example}{Example}
\numberwithin{equation}{section}
\begin{document}

\title[A characterization of the discontinuity point set]%
{A characterization of the discontinuity point set of strongly separately continuous functions on products}
\author[Olena Karlova \and Volodymyr Mykhaylyuk]%
{Olena Karlova* \and Volodymyr Mykhaylyuk**}

\newcommand{\acr}{\newline\indent}

\address{\llap{*\,} Chernivtsi National University\acr
                   Kotsyubyns'koho 2\acr
                   58 012 Chernivtis\acr
                   UKRAINE}
\email{maslenizza.ua@gmail.com}

\address{\llap{**\,}Chernivtsi National University\acr
                   Kotsyubyns'koho 2\acr
                   58 012 Chernivtis\acr
                   UKRAINE}
\email{vmykhaylyuk@ukr.net}

\subjclass[2010]{Primary 54C08, 54C30; Secondary  26B05}
\keywords{strongly separately continuous function, separately continuous function}

\begin{abstract}
We study properties of strongly separately continuous mappings defined on subsets of products of topological spaces equipped  with the topology of pointwise convergence. In particular, we give a necessary and sufficient condition for a strongly separately continuous mapping to be continuous on a product of an arbitrary  family of topological spaces. Moreover, we charac\-terize the discontinuity point set of strongly separately continuous function defined on a subset of countable product of finite-dimensional normed spaces.
\end{abstract}

\maketitle

\section{Introduction}

In 1998 Omar Dzagnidze~\cite{Dzagnidze} introduced a notion of  a strongly separately continuous function $f:\mathbb R^n\to\mathbb R$. Namely, he calls a function $f$ {\it strongly separately continuous} at a point $x^0=(x_1^0,\dots,x_n^0)\in\mathbb R^n$ if for every $k=1,\dots,n$ the equality
$\lim\limits_{x\to x^0}|f(x_1,\dots,x_k,\dots,x_n)-f(x_1,\dots,x_k^0,\dots,x_n)|=0$ holds. \mbox{Dzagnidze} proved that a function $f:\mathbb R^n\to\mathbb R$ is strongly separately continuous at $x^0$ if and only if $f$ is continuous at $x^0$.

Extending the investigations of Dzagnidze, J.~\v{C}in\v{c}ura, T.~\v{S}al\'{a}t and T.~Visnyai in \cite{CSV} consider strongly separately continuous functions defined on the space $\ell_2$ of sequences $x=(x_n)_{n=1}^\infty$ of real numbers such that $\sum\limits_{n=1}^\infty x_n^2<+\infty$ endowed with the standard metric $d(x,y)=(\sum\limits_{n=1}^\infty (x_n-y_n)^2)^{1/2}$. In particular, the authors gave an example of a strongly separately continuous everywhere discontinuous function $f:\ell_2\to\mathbb R$.

Recently, T.~Visnyai in \cite{TV} continued to study properties of strongly separately continuous functions on $\ell_2$ and constructed a strongly separately continuous function $f:\ell_2\to\mathbb R$ which belongs to the third Baire class and is not quasi-continuous at every point. Moreover, T.~Visnyai gave a sufficient condition for strongly separately continuous function to be continuous on $\ell_2$.

In this paper we study strongly separately continuous functions defined on subsets of a product $\prod\limits_{t\in T} X_t$ of topological spaces $X_t$ equipped with the Tychonoff topology of pointwise convergence.
In Section 2 we introduce a notion and give the simplest properties of $\mathcal S$-topology which is tightly connected with  strongly separately continuous functions. In the third section we give a necessary and sufficient condition for a strongly separately continuous mapping to be continuous on a product of an arbitrary  family of topological spaces.
In Section 4 we find a necessary condition on a set to be the discontinuity point set of a strongly separately continuous mapping defined on a product of topological spaces. Finally, in the fifth section we describe the discontinuity point set of a strongly separately continuous function defined on a subset of a countable product of finite-dimensional normed spaces.

\section{A notion and properties of $\mathcal S$-topology}

Let $X=\prod\limits_{t\in T} X_t$ be a product of a family of sets $X_t$ with $|X_t|>1$ for all $t\in T$.
If $S\subseteq S_1\subseteq T$, $a=(a_t)_{t\in T}\in X$, $x=(x_t)_{t\in S_1}\in \prod\limits_{t\in S_1}X_t$, then we denote by $a_S^x$ a point $(y_t)_{t\in T}$, where
$$
y_t=\left\{\begin{array}{ll}
             x_t, & t\in S, \\
             a_t, & t\in T\setminus S.
           \end{array}
\right.
$$
In the case $S=\{s\}$ we shall write $a_s^x$ instead of $a_{\{s\}}^x$.

If $n\in\mathbb N$, then
$$
\sigma_n(x)=\{y=(y_t)_{t\in T}\in X: |\{t\in T:y_t\ne x_t\}|\le n\}
$$
and
$$
\sigma(x)=\bigcup\limits_{n=1}^\infty \sigma_n(x).
$$
Each of the sets of the form $\sigma(x)$ for an $x\in X$ is called {\it a $\sigma$-product of $X$}.

We denote by $\tau$ the Tychonoff topology on a product $X=\prod\limits_{t\in T}X_t$ of topological spaces $X_t$. If $X_0\subseteq X$, then the symbol $(X_0,\tau)$ means the subspace $X_0$ equipped with  the Tychonoff topology induced from $(X,\tau)$.

If $X_t=X$ for all $t\in T$ then the product $\prod\limits_{t\in T}X_t$ we also denote by $X^{\mathfrak m}$, where $\mathfrak m=|T|$.

\begin{definition}\label{def:snow-flake}
  {\rm A set $A\subseteq \prod\limits_{t\in T} X_t$ is called {\it $\mathcal S$-open} if $\sigma_1(x)\subseteq A$ for all $x\in A$.}
\end{definition}

We remark that the definition of an $\mathcal S$-open set develops the definition of a set of the type ($P_1$) introduced in \cite{CSV}.

Let $\mathcal S(X)$ denote the collection of all $\mathcal S$-open subsets of $X$. We notice that $\mathcal S(X)$ is a topology on $X$. We will denote by  $(X,\mathcal S)$ the product $X=\prod\limits_{t\in T}X_t$ equipped with the topology $\mathcal S(X)$.

The proof of the following is straightforward.

\begin{proposition}\label{prop:simple_prop_s-open} Let $X=\prod\limits_{t\in T}X_t$, $|X_t|>1$ for all $t\in T$, $x\in X$ and $A\subseteq X$. Then
\begin{enumerate}
  \item \label{S-Clopen} $A\in\mathcal S(X)$ if and only if $X\setminus A \in \mathcal S(X)$;

  \item\label{pr:union-snow}  $A\in\mathcal S(X)$ if and only if $A=\bigcup\limits_{x\in A}\sigma(x)$;

  \item\label{prop:componenta} $\sigma(x)$ is the smallest $\mathcal S$-open set which contains $x$;

  \item\label{s_open_dense}  if $A\in \mathcal S(X)$, then $A$ is dense in $(X,\tau)$.

  \item  there exists a non-trivial $\mathcal S$-open subset of $X$ if and only if  $|T|\ge\aleph_0$.
\end{enumerate}
\end{proposition}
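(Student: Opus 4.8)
The plan is to reduce all five assertions to one structural observation. Define on $X$ the relation $x\sim y$ by $|\{t\in T:x_t\ne y_t\}|<\aleph_0$. This is an equivalence relation whose class of a point $x$ is exactly $\sigma(x)$, so the $\sigma$-products partition $X$. Two elementary facts will be used repeatedly: the condition $y\in\sigma_1(x)$ is symmetric in $x$ and $y$ (differing in at most one coordinate is symmetric), and $x\in\sigma_1(x)\subseteq\sigma(x)$. The technical core, which I would establish first as a lemma, is that $A$ is $\mathcal S$-open if and only if $\sigma(x)\subseteq A$ for every $x\in A$, i.e. $A$ is a union of $\sim$-classes. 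The only nontrivial direction is that $\sigma_1(x)\subseteq A$ forces $\sigma(x)\subseteq A$, which I would obtain by induction on $n$, proving $\sigma_n(x)\subseteq A$. For the step from $n$ to $n+1$: given $y\in\sigma_{n+1}(x)$ differing from $x$ in exactly $n+1$ coordinates, pick one such coordinate $s$ and replace $y_s$ by $x_s$ to get a point $z\in\sigma_n(x)\subseteq A$; since $y\in\sigma_1(z)$ and $A$ is $\mathcal S$-open, $y\in A$.

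Granting this lemma, parts \eqref{prop:componenta} and \eqref{pr:union-snow} are immediate. The set $\sigma(x)$ is a single $\sim$-class, hence $\mathcal S$-open, contains $x$, and by the lemma lies inside every $\mathcal S$-open set containing $x$; this gives \eqref{prop:componenta}. Part \eqref{pr:union-snow} is then just the statement that an $\mathcal S$-open set coincides with the union of the classes of its points, while conversely any such union is $\mathcal S$-open because each $\sigma(x)$ is. For \eqref{S-Clopen} I would note that, since the classes partition $X$, the complement of a union of classes is again a union of classes; by the lemma, $A\in\mathcal S(X)$ and $X\setminus A\in\mathcal S(X)$ both amount to being a union of classes, and these conditions are equivalent. (A direct argument also works: if $x\in X\setminus A$ but some $y\in\sigma_1(x)$ lay in $A$, then $x\in\sigma_1(y)\subseteq A$, a contradiction.)

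For \eqref{s_open_dense} I take $A$ nonempty and $\mathcal S$-open, fix $x\in A$, and test density against an arbitrary basic $\tau$-neighbourhood $U$ of a point $z$, determined by finitely many coordinates $t_1,\dots,t_k$. The point agreeing with $z$ on $t_1,\dots,t_k$ and with $x$ elsewhere differs from $x$ in at most $k$ coordinates, hence lies in $\sigma(x)\subseteq A$, and it also lies in $U$; thus $U\cap A\ne\emptyset$. For the last part, if $T$ is finite with $|T|=n$ then $\sigma_n(x)=X$, so $\sigma(x)=X$ and by the lemma every nonempty $\mathcal S$-open set equals $X$, leaving no non-trivial one; conversely, if $T$ is infinite then $\sigma(a)$ for a fixed $a$ is non-trivial $\mathcal S$-open, since using $|X_t|>1$ one chooses $b_t\ne a_t$ for every $t$, and this $b$ differs from $a$ in infinitely many coordinates, so $b\notin\sigma(a)\ne X$.

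The only step requiring genuine work is the induction in the core lemma; once the partition picture is in place the rest is bookkeeping. The points to watch are the symmetry of the relation $\sigma_1$, which is what makes \eqref{S-Clopen} true, the use of $|X_t|>1$ in the infinite case of the last part, and the degenerate empty-set exception in \eqref{s_open_dense}, which I read as implicitly restricted to nonempty $A$.
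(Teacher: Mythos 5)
Your proof is correct and complete; the paper itself omits the argument as ``straightforward,'' and your reduction to the partition of $X$ into the equivalence classes $\sigma(x)$, with the inductive step showing $\sigma_1(x)\subseteq A$ forces $\sigma_n(x)\subseteq A$ for all $n$, is exactly the intended line of reasoning. Your reading of item (4) as applying to nonempty $A$ and your use of $|X_t|>1$ in item (5) are both the right points of care.
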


It follows from Proposition~\ref{prop:simple_prop_s-open} that $\sigma$-products of two distinct points of $\prod\limits_{t\in T}X_t$ either coincide, or does not intersect. Consequently, the family of all $\sigma$-products of an arbitrary $\mathcal S$-open set $X\subseteq \prod\limits_{t\in T}X_t$ generates a partition of $X$ on  mutually disjoint $\mathcal S$-open sets, which we will call {\it $\mathcal S$-components of $X$.}

Notice that every $\mathcal S$-component of $X$ is an indiscrete subspace of $(X,\mathcal S(X))$ and the space $(X,\mathcal S(X))$ is a topological sum of indiscrete spaces.

Proposition \ref{prop:simple_prop_s-open} easily implies the following properties of the $\mathcal S$-topology.

\begin{proposition} Let $X=\prod\limits_{t\in T}X_t$ and $|X_t|>1$ for all $t\in T$. Then the space $(X,{\mathcal S})$
  \begin{enumerate}
   \item  is first-countable;

   \item  does not satisfy $T_0$, $T_1$ and $T_2$ if $T\ne\emptyset$;

   \item  satisfy $T_3$, $T_{3\frac 12}$ and $T_4$;

   \item\label{ArcwiseConn} is arcwise connected if and only if $|T|<\aleph_0$;

   \item is extremally disconnected.
\end{enumerate}
\end{proposition}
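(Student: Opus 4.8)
The plan is to reduce all five assertions to one structural description of $\mathcal S(X)$: \emph{a subset of $X$ is open if and only if it is a union of $\mathcal S$-components, and the same holds for closed subsets}. The first half is precisely Proposition~\ref{prop:simple_prop_s-open}(\ref{pr:union-snow}). For closed sets I would show that if $F$ is closed and $\sigma(x)\cap F\ne\emptyset$, then $\sigma(x)\subseteq F$: otherwise $\sigma(x)$ meets the open set $X\setminus F$ at some point $q$, and since $\sigma(q)=\sigma(x)$ is the smallest $\mathcal S$-open neighbourhood of $q$ (Proposition~\ref{prop:simple_prop_s-open}(\ref{prop:componenta})), we get $\sigma(x)=\sigma(q)\subseteq X\setminus F$, contradicting $\sigma(x)\cap F\ne\emptyset$. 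Hence every closed set is a union of $\mathcal S$-components as well, so in $(X,\mathcal S)$ the open sets, the closed sets, and the clopen sets all coincide, and the space is the topological sum of its indiscrete $\mathcal S$-components. Everything else is read off from this.

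From the structural fact the separation and countability statements fall out immediately. For (1), $\sigma(x)$ is the least open set containing $x$, so the single set $\{\sigma(x)\}$ is a neighbourhood base at $x$ and the space is first countable. For (5), every open $U$ is closed, whence $\overline U=U$ is open, giving extremal disconnectedness. For (3), given a point $x$ and a closed set $F$ with $x\notin F$, the component $\sigma(x)$ is clopen and, by the structural fact, disjoint from $F$; thus $\sigma(x)$ and $X\setminus\sigma(x)$ are disjoint open sets separating $x$ from $F$ (regularity), and the indicator $\chi_{X\setminus\sigma(x)}\colon X\to[0,1]$ is continuous (preimages of open sets are among $\emptyset,\sigma(x),X\setminus\sigma(x),X$) and vanishes at $x$ while equalling $1$ on $F$ (complete regularity). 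For normality, two disjoint closed sets are unions of disjoint families of components, hence are already themselves the required disjoint open neighbourhoods.

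For (2), fix $s\in T$ (possible since $T\ne\emptyset$) and two distinct values in $X_s$, and let $x,y$ differ only in the $s$-th coordinate, so that $y\in\sigma_1(x)\subseteq\sigma(x)=\sigma(y)$. Every open set containing either point contains their common component and hence the other point, so $x$ and $y$ cannot be separated even by a single open set; this already defeats $T_0$, and a fortiori $T_1$ and $T_2$.

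For (4) I would first identify the connected components. Each $\sigma(x)$ is indiscrete, hence connected, and is clopen, so it is maximal connected (any strictly larger connected set would be split by $\sigma(x)$ and $X\setminus\sigma(x)$); thus the connected components of $(X,\mathcal S)$ are exactly the $\mathcal S$-components. If $|T|<\aleph_0$, then every $y$ differs from $x$ in finitely many coordinates, so $\sigma(x)=X$ and the space is indiscrete; any map $[0,1]\to X$ is then continuous, so any two points are joined by a path and the space is arcwise connected. If $|T|\ge\aleph_0$, choosing $y$ differing from $x$ in all coordinates gives $\sigma(x)\ne X$, so there are at least two components; since the continuous image of the connected interval $[0,1]$ must lie in one component, no path joins points of distinct components. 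I expect the only genuinely delicate point to be the meaning of ``arcwise connected'': in a non-Hausdorff space a true arc (a homeomorphic copy of $[0,1]$) between two distinct points cannot exist, since a subspace of an indiscrete space is indiscrete, so the assertion must be read in the sense of path-connectedness, under which the argument above goes through. Beyond this and the structural lemma of the first paragraph, all the remaining steps are routine bookkeeping.
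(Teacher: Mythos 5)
Your proof is correct and follows essentially the same route as the paper, which gives no explicit argument but derives all five assertions from Proposition~\ref{prop:simple_prop_s-open} together with the observation, made immediately before the statement, that $(X,\mathcal S)$ is the topological sum of its indiscrete $\mathcal S$-components. Your remark that item~(\ref{ArcwiseConn}) must be read as path-connectedness (a genuine arc, i.e.\ a homeomorphic copy of $[0,1]$, cannot exist inside an indiscrete component) is a fair refinement that the paper glosses over.
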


\section{A necessary and sufficient condition for a strongly separately continuous mapping to be continuous}

\begin{definition}\label{def:sep-cont}
  {\rm  Let $(X_t:t\in T)$ be a family of topological spaces, $Y$ be a topological space and let $X\subseteq \prod\limits_{t\in T}X_t$ be an $\mathcal S$-open set. A mapping $f:X\to Y$ is said to be {\it separately continuous at a point $a=(a_t)_{t\in T}\in X$ with respect to the $t$-th variable} provided that the mapping $g:X_t\to Y$ defined by the rule $g(x)=f(a_t^x)$ for all $x\in X_t$ is continuous at the point $a_t\in X_t$.}
\end{definition}

Let $f:X\to Y$ be a mapping between topological spaces $X$ and $Y$, $a\in X$ and $b\in Y$. Then $\lim\limits_{x\to a}f(x)=b$ if and only if for any neighborhood $V$ of $b$ in $Y$ there is a neighborhood $U$ of $a$ in $X$ such that $f(U)\subseteq V$.

\begin{definition}
  {\rm Let $X\subseteq\prod\limits_{t\in T}X_t$ be an $\mathcal S$-open set, $\mathcal T$ be a topology on $X$ and let $(Y,d)$ be a metric space. A mapping $f:(X,\mathcal T)\to Y$ is called {\it strongly separately continuous at a point $a\in X$ with respect to the $t$-th variable} if
  $$
\lim_{x\to a}d(f(x),f(x_{t}^a))=0.
  $$}
  \end{definition}

  \begin{definition}
    {\rm A mapping $f:X\to Y$ is
    \begin{itemize}
      \item {\it (strongly) separately continuous at a point $a\in X$} if $f$ is (strongly) separately continuous at  $a$ with respect to each variable $t\in T$;

    \item {\it (strongly) separately continuous on the set $X$} if $f$ is (strongly) separately continuous  at each point $a\in X$ with respect to each variable $t\in T$.
        \end{itemize} }
  \end{definition}

\begin{definition}
  {\rm A set $A\subseteq \prod\limits_{t\in T}X_t$ is called {\it projectively symmetric with respect to a point $a\in A$} if for all $t\in T$
  and for all $x\in A$ we have $x_{t}^a\in A$.}
\end{definition}

\begin{definition}
  {\rm Let $X\subseteq \prod\limits_{t\in T}X_t$ and $\mathcal T$ be a topology on $X$. Then $(X,\mathcal T)$ is said to be {\it locally projectively symmetric} if every $x\in X$ has a base of projectively symmetric neighborhoods with respect to $x$.}
\end{definition}

It is easy to see that an arbitrary $\mathcal S$-open subset of a product $\prod\limits_{t\in T}X_t$ of topological spaces $X_t$ equipped either with the Tychonoff topology $\tau$, or with the $\mathcal S$-topology, is a locally projectively symmetric space. All classical spaces of sequences as the space $c$ of all convergence sequences or the spaces $\ell_p$ with $0<p\le \infty$ are locally projectively symmetric.

\begin{proposition}
Let $X$ be an $\mathcal S$-open subset of a product $\prod\limits_{t\in T}X_t$ of topological spaces equipped with a locally projectively symmetric topology $\mathcal T$, $a=(a_t)_{t\in T}\in X$, $Y$ be a metric space and let $f:(X,\mathcal T)\to Y$ be a continuous mapping at the point $a$. Then  $f$ is strongly separately continuous  at $a$.
\end{proposition}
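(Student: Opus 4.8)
The plan is to unwind the limit in the definition of strong separate continuity and reduce it to a single triangle-inequality estimate, using continuity of $f$ at $a$ together with local projective symmetry. Fix a variable $t\in T$ and let $\varepsilon>0$. Since $f$ is continuous at $a$, there is a $\mathcal T$-neighbourhood $U_0$ of $a$ such that $d(f(y),f(a))<\varepsilon/2$ for every $y\in U_0$. Because $(X,\mathcal T)$ is locally projectively symmetric, I would then shrink $U_0$ to a neighbourhood $U\subseteq U_0$ of $a$ which is projectively symmetric with respect to $a$.

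The key point is that this single choice of $U$ controls not only $f(x)$ but also $f(x_t^a)$. Indeed, for $x\in U$ projective symmetry applied with $s=t$ gives $x_t^a\in U$, and since $X$ is $\mathcal S$-open and $x_t^a\in\sigma_1(x)$, the point $x_t^a$ also lies in $X$, so that $f(x_t^a)$ is defined. Hence both $d(f(x),f(a))<\varepsilon/2$ and $d(f(x_t^a),f(a))<\varepsilon/2$, and the triangle inequality yields
\[
d(f(x),f(x_t^a))\le d(f(x),f(a))+d(f(a),f(x_t^a))<\varepsilon
\]
for all $x\in U$. By the neighbourhood formulation of the limit recorded just before the definition of strong separate continuity (applied to the function $x\mapsto d(f(x),f(x_t^a))$ and the neighbourhood $V=(-\varepsilon,\varepsilon)$ of $0$), this says precisely that $\lim_{x\to a}d(f(x),f(x_t^a))=0$, i.e. $f$ is strongly separately continuous at $a$ with respect to the $t$-th variable. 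As $t\in T$ was arbitrary, $f$ is strongly separately continuous at $a$.

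I do not expect a deep obstacle here; the entire content is the observation that strong separate continuity measures the distance between $f(x)$ and its projection $f(x_t^a)$, and that both points can be forced into one fixed small neighbourhood of $a$. The only two places requiring care are the reason that $x_t^a$ is admissible as an argument of $f$ — which is exactly where $\mathcal S$-openness of $X$ enters — and the reason that $x_t^a$ stays close to $a$ — which is precisely what local projective symmetry is designed to guarantee. Without projective symmetry one could have $x$ near $a$ while $x_t^a$ escapes a fixed neighbourhood of $a$, and the estimate above would fail.
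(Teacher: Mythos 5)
Your proposal is correct and follows essentially the same route as the paper: choose a projectively symmetric neighbourhood $U$ of $a$ on which $d(f(\cdot),f(a))<\varepsilon/2$, note that $x_t^a\in U$ by projective symmetry, and conclude by the triangle inequality. The extra remark that $\mathcal S$-openness guarantees $x_t^a\in X$ is a sensible clarification but does not change the argument.
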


\begin{proof}
Fix $\varepsilon>0$ and $t\in T$. Take a projectively symmetric with respect to $a$ neighborhood $U$ of $a$ such that $$d(f(x),f(a))<\frac{\varepsilon}{2}$$ for all $x\in U$. Then   $x_{t}^a\in U$ and
 $$
 d(f(x),f(x_{t}^a))\le d(f(x),f(a))+d(f(a),f(x_{t}^a))<\frac{\varepsilon}{2}+\frac{\varepsilon}{2}=\varepsilon
 $$
 for all $x\in U$.
\end{proof}

Note that the inverse implication is not true in case $(X,\mathcal T)$ is not a locally projectively symmetric space, as the example below shows.

\begin{example}
   Let $X$ be the Niemytski plane, i.e. $X=\mathbb R\times [0,+\infty)$, where a base of neighborhoods of $(x,y)\in X$ with $y>0$ form open balls with the center in $(x,y)$, and a base of neighborhoods of $(x,0)$ form the sets $U\cup\{(x,0)\}$, where $U$ is an open ball which tangents to $\mathbb R\times \{0\}$ in the point $(x,0)$. Then there exists a continuous function $f:X\to\mathbb R$, which is not strongly separately continuous.
\end{example}

\begin{proof}
  Denote $a=(0,0)$ and consider an arbitrary basic neighborhood $U$ of $a$ in $X$. Since $X$ is a completely regular space, there exists a continuous function $f:X\to [0,1]$ such that $f(a)=0$ and $f(x,y)=1$ for all $(x,y)\in X\setminus U$. We show that $f$ is not strongly separately continuous at the point $a$ with respect to the second variable. Indeed, fix $\varepsilon=\frac 12$  and a neighborhood $V$ of $a$. By the continuity of $f$ at $a$ we choose a neighborhood $V_0$ of $a$ such that $V_0\subseteq V$ and $f(x,y)<\frac 12$ for all $(x,y)\in V_0$. Now let $(x,y)\in V_0$ be an arbitrary point with $x\ne 0$. Then $f(x,0)=1$ and $|f(x,y)-f(x,0)|=1-f(x,y)>\frac 12$. Hence, $f$ is not strongly separately continuous.
\end{proof}

Let $\mathcal E$ denote the Euclidean topology on $\mathbb R$, $X_1=X_2=(\mathbb R, \mathcal E)$ and let $\mathcal T$ be a discrete topology on the set $X=\mathbb R\times \mathbb R$. We consider the following function $f:X\to\mathbb R$,
$$
f(x_1,x_2)=\left\{\begin{array}{ll}
                    1, & (x_1,x_2)\in X\setminus\{(0,0)\}, \\
                    0, & (x_1,x_2)=(0,0).
                  \end{array}
\right.
$$
Then $f$ is continuous on $(X,\mathcal T)$, since $X$ is discrete, but $f$ is not separately continuous on $X_1\times X_2$ at the point $(0,0)$ by definition~\ref{def:sep-cont}. Therefore, it is natural to find conditions on a topology on a product of topological spaces such that the implication ''$f$ is continuous $\Rightarrow$ $f$ is separately continuous'' holds.

\begin{definition}\label{def:top-coordinated}
  {\rm Let $X\subseteq \prod\limits_{t\in T}X_t$ be an $\mathcal S$-open subset of a product of topological spaces $(X_t,\mathcal T_t)$ and let $\mathcal T$ be a topology on $X$. We say that $\mathcal T$ is {\it coordinated with the family $(\mathcal T_t:t\in T)$} if
  \begin{gather}\label{eq:convergence}
    \lim\limits_{x\to a_t} a_t^x=a
  \end{gather}
  for all $t\in T$ and $a=(a_s)_{s\in T}\in X$.}
\end{definition}

\begin{proposition}\label{prop:cont-impl-sep}
  Let $\mathcal T$ be a topology on an $\mathcal S$-open subset $X$ of a product of topological spaces $(X_t,\mathcal T_t)$ coordinated with the family $(\mathcal T_t:t\in T)$, $Y$ is a (metric) space and $f:X\to Y$ be a (strongly separately) continuous mapping at a point $a\in X$.
  Then $f$ is separately continuous at $a$.
\end{proposition}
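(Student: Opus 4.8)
The plan is to fix $t\in T$ and show that the map $g\colon X_t\to Y$, $g(x)=f(a_t^x)$, is continuous at $a_t$; since $t$ is arbitrary this yields separate continuity at $a$. The whole argument rests on a single reformulation of the coordination hypothesis \eqref{eq:convergence}: it says precisely that the ``insertion'' map $h\colon X_t\to(X,\mathcal T)$, $h(x)=a_t^x$, is continuous at $a_t$, since $h(a_t)=a_t^{a_t}=a$ and $\lim_{x\to a_t}h(x)=a$. Note that $g=f\circ h$, so each of the two implications reduces to feeding the family $h(x)=a_t^x$ into the respective hypothesis on $f$.

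For the first (continuity) version I would simply compose continuities. Given a neighbourhood $V$ of $f(a)=g(a_t)$ in $Y$, continuity of $f$ at $a$ produces a $\mathcal T$-neighbourhood $U$ of $a$ with $f(U)\subseteq V$, and coordination produces a neighbourhood $W$ of $a_t$ in $X_t$ with $a_t^x=h(x)\in U$ for every $x\in W$. Then $g(x)=f(a_t^x)\in V$ on $W$, so $g$ is continuous at $a_t$.

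For the second (strongly separately continuous) version, where $Y$ is metric, the key extra ingredient is the elementary identity $(a_t^x)_t^a=a$: resetting the $t$-th coordinate of $a_t^x$ back to $a_t$ returns $a$ exactly. Consequently, evaluating the strong-separate-continuity condition $\lim_{z\to a}d\bigl(f(z),f(z_t^a)\bigr)=0$ along the points $z=a_t^x$ produces exactly $d\bigl(f(a_t^x),f(a)\bigr)$. Given $\varepsilon>0$, strong separate continuity of $f$ at $a$ with respect to $t$ supplies a $\mathcal T$-neighbourhood $U$ of $a$ on which $d\bigl(f(z),f(z_t^a)\bigr)<\varepsilon$, and coordination again supplies $W\ni a_t$ with $h(W)\subseteq U$; for $x\in W$ we then read off $d\bigl(g(x),g(a_t)\bigr)=d\bigl(f(a_t^x),f(a)\bigr)<\varepsilon$, which is the desired continuity of $g$ at $a_t$.

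The only place demanding care is the bookkeeping of the two conventions $a_t^x$ and $z_t^a$, and in particular verifying the identity $(a_t^x)_t^a=a$ that lets the ``ambient'' limit $z\to a$ in the definition of strong separate continuity be specialized to the one-parameter family $a_t^x$. Once coordination is recognized as continuity of $h$ at $a_t$, no further topological input is needed: specializing a limit taken over a $\mathcal T$-neighbourhood of $a$ to any family landing in that neighbourhood is automatic, so both implications follow uniformly.
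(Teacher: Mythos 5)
Your proof is correct and follows essentially the same route as the paper's: recognize that coordination makes $x\mapsto a_t^x$ continuous at $a_t$ and compose, with the identity $(a_t^x)_t^a=a$ handling the strongly separately continuous case. The paper's own proof is just a terser version of this (it dismisses the second case with ``we argue similarly''), so your write-up merely fills in the details the authors omit.
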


\begin{proof} Fix $t\in T$. Assume that $f$ is continuous at the point $a$ and denote $g(x)=f(a_t^x)$ for all $x\in X_t$. Then
$\lim\limits_{x\to a_t} g(x)=\lim\limits_{x\to a_t} f(a_t^x)=f(a)=g(a_t)$.

We argue similarly in the case $Y$ is a metric space and $f$ is strongly separately continuous.
\end{proof}

\begin{proposition}\label{prop:coordinated-ex}
  Let $X\subseteq \prod\limits_{t\in T}X_t$ be an $\mathcal S$-open subset of a product of topological spaces $(X_t,\mathcal T_t)$ and let $\mathcal T$ be a topology on $X$. If one of the following conditions holds
  \begin{enumerate}
    \item $(X,\mathcal T)=(X,\tau)$, or

    \item $(X_t,\mathcal T_t)=(\mathbb R,\mathcal E)$ for every $t\in T$ and $(X,\mathcal T)$ is a topological vector space,
  \end{enumerate}
  then $\mathcal T$ is coordinated with $(\mathcal T_t:t\in T)$.
\end{proposition}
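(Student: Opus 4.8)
The plan is to show, in each of the two cases, that the map $x\mapsto a_t^x$ sending $(X_t,\mathcal T_t)$ into $(X,\mathcal T)$ is continuous at the point $a_t$ with value $a$ there; this is exactly the assertion $\lim_{x\to a_t}a_t^x=a$ of \eqref{eq:convergence}. Before treating the cases I would first settle well-definedness: since $X$ is $\mathcal S$-open and $a\in X$, Proposition~\ref{prop:simple_prop_s-open}\eqref{prop:componenta} gives $\sigma_1(a)\subseteq X$, and every point $a_t^x$ differs from $a$ in at most the $t$-th coordinate, so $a_t^x\in\sigma_1(a)\subseteq X$ for all $x\in X_t$. Thus the map is genuinely defined on all of $X_t$, and it remains only to check its continuity at $a_t$.

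For case (1), where $\mathcal T=\tau$, I would argue directly through basic neighborhoods of the Tychonoff topology. Fix $t\in T$ and a basic $\tau$-neighborhood $U=X\cap\prod_{s\in T}U_s$ of $a$, where $U_s$ is open in $X_s$ with $a_s\in U_s$ and $U_s=X_s$ for all but finitely many $s$. The candidate neighborhood of $a_t$ is simply $W=U_t$: for $x\in U_t$ the point $a_t^x$ has $t$-th coordinate $x\in U_t$ and every other coordinate equal to $a_s\in U_s$, so $a_t^x\in U$. This yields $f(W)\subseteq U$ in the language of the limit definition recalled before Definition~\ref{def:top-coordinated}, hence $\lim_{x\to a_t}a_t^x=a$.

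For case (2), where each $X_t=(\mathbb R,\mathcal E)$ and $(X,\mathcal T)$ is a topological vector space, I would exploit the linear structure. Put $v=a_t^{a_t+1}-a$, which lies in $X$ as a difference of two points of $X$ and is precisely the vector with $1$ in position $t$ and $0$ elsewhere. A coordinatewise check then shows that for every $x\in\mathbb R$ one has
$$
a_t^x=a+(x-a_t)\,v .
$$
Since $(X,\mathcal T)$ is a topological vector space, the assignment $x\mapsto a+(x-a_t)v$ is continuous, being the composition of the continuous scalar map $x\mapsto x-a_t$ on $\mathbb R$, the continuous map $\lambda\mapsto\lambda v$ (scalar multiplication with $v$ fixed), and the continuous translation $w\mapsto a+w$. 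Its value at $x=a_t$ is $a+0\cdot v=a$, which delivers $\lim_{x\to a_t}a_t^x=a$.

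The whole argument is short, and I do not expect a serious obstacle; the two points needing care are the well-definedness step via $\mathcal S$-openness and, in case (2), the correct reinterpretation of $a_t^x$ through the vector-space operations inherited coordinatewise from the product $\mathbb R^{T}$, since only then may one legitimately invoke continuity of addition and of scalar multiplication. I would flag this reinterpretation as the one place where the identification $a_t^x=a+(x-a_t)v$ must be verified explicitly rather than taken for granted.
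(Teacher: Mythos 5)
Your proof is correct and follows essentially the same route as the paper: case (1) is the direct unwinding of the Tychonoff basic neighborhoods (which the paper dismisses as immediate), and case (2) is the paper's argument of realizing $a_t^x$ as a scalar multiple of a fixed coordinate vector and invoking continuity of the vector-space operations, except that you work with the affine formula $a_t^x=a+(x-a_t)v$ instead of first normalizing to $a=0$ as the paper does. The points you flag --- well-definedness via $\sigma_1(a)\subseteq X$ and the coordinatewise identification of the linear structure --- are exactly the right ones to check, and your version has the small advantage of not needing to justify the paper's ``without loss of generality'' translation.
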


\begin{proof}
\begin{enumerate}
  \item It immediately implies from the definition of the Tychonoff topolo\-gy.

  \item Fix $a=(a_s)_{s\in T}\in X$ and $t\in T$. Without loss of generality we may assume that $a_s=0$ for all $s\in S$. Let $b=1$. Then $a_t^b\in \sigma_1(a)\subseteq X$. Since the function $\varphi:(X,\mathcal T)\times \mathbb R\to (X,\mathcal T)$, $\varphi(y,\lambda)=\lambda y$, is continuous, we have that $\lim\limits_{\lambda\to 0}\varphi(y,\lambda)=0$ in $(X,\mathcal T)$ for every $y\in X$. Then $\lim\limits_{x\to 0} a_t^x=\lim\limits_{x\to 0} \varphi(a_t^b,x)=0$ in $(X,\mathcal T)$.
\end{enumerate}
\end{proof}

\begin{theorem}\label{prop:strong-top-s}
 Let $X\subseteq\prod\limits_{t\in T}X_t$ be an $\mathcal S$-open set and $(Y,d)$ be a metric space. A mapping \mbox{$f:(X,\mathcal S)\to Y$} is continuous if and only if $f:(X,\mathcal T)\to Y$ is strongly separately continuous for an arbitrary topology $\mathcal T$ on $X$.
\end{theorem}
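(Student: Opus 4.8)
The plan is to route both implications through a single clean description of continuity in the $\mathcal{S}$-topology. Recall from Section~2 that $\sigma(x)$ is the smallest $\mathcal{S}$-open set containing $x$ and that $(X,\mathcal{S})$ is a topological sum of its $\mathcal{S}$-components, each of which is an indiscrete subspace. Since $(Y,d)$ is metric and therefore Hausdorff, a map from an indiscrete space into $Y$ is continuous exactly when it is constant. So the first step is to establish the equivalence
\[
  f\colon(X,\mathcal{S})\to Y \text{ is continuous} \iff f \text{ is constant on every } \sigma\text{-product } \sigma(x).
\]
The nontrivial direction follows by noting that continuity of $f$ at $x$ forces $f(\sigma(x))\subseteq V$ for every open $V\ni f(x)$, because $\sigma(x)$ is contained in each $\mathcal{S}$-open neighbourhood of $x$; the Hausdorff property then gives $f(\sigma(x))=\{f(x)\}$.

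For the forward implication, assume $f\colon(X,\mathcal{S})\to Y$ is continuous and fix any topology $\mathcal{T}$, a point $a\in X$ and a coordinate $t\in T$. For every $x\in X$ the point $x_{t}^a$ differs from $x$ in at most the $t$-th coordinate, so $x_{t}^a\in\sigma_1(x)\subseteq\sigma(x)$; by constancy of $f$ on $\sigma(x)$ we obtain $f(x_{t}^a)=f(x)$ and hence $d(f(x),f(x_{t}^a))=0$ for all $x$. The function $x\mapsto d(f(x),f(x_{t}^a))$ is thus identically zero, so its limit as $x\to a$ equals $0$ regardless of $\mathcal{T}$, and $f$ is strongly separately continuous on $(X,\mathcal{T})$.

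The backward implication carries the real content, and the idea is to instantiate the assumed strong separate continuity at the coarsest admissible topology, namely the indiscrete topology $\mathcal{T}_0=\{\emptyset,X\}$ on $X$. For $\mathcal{T}_0$ the only neighbourhood of any point is $X$ itself, so the requirement $\lim_{x\to a}d(f(x),f(x_{t}^a))=0$ collapses to $d(f(x),f(x_{t}^a))=0$ for all $x\in X$; that is, $f(x)=f(x_{t}^a)$ for every $a,x\in X$ and every $t\in T$. Specialising $a=x_{t}^c$ for an arbitrary $c\in X_t$ (legitimate since $x_{t}^c\in\sigma_1(x)\subseteq X$ by $\mathcal{S}$-openness) yields $f(x)=f(x_{t}^c)$, so the value of $f$ is unchanged under altering a single coordinate. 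Iterating this over the finitely many coordinates in which a point of $\sigma(x)$ differs from $x$ shows that $f$ is constant on each $\sigma$-product, and the equivalence from the first step then delivers continuity of $f\colon(X,\mathcal{S})\to Y$.

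The only point requiring care is this final iteration: one must observe that every $y\in\sigma(x)$ is reached from $x$ by changing finitely many coordinates one at a time, each intermediate point again lying in $X$ because $X$ is $\mathcal{S}$-open, which is precisely what lets the single-coordinate invariance propagate to full constancy on $\sigma(x)$. I expect this to be the main (though still mild) obstacle; everything else is immediate from the structure of the $\mathcal{S}$-topology recorded in Section~2.
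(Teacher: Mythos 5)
Your proof is correct and follows essentially the same route as the paper's: both directions reduce to the observation that continuity of $f$ on $(X,\mathcal S)$ is equivalent to $f$ being constant on each $\sigma$-component, the necessity part being verbatim the paper's argument. The only cosmetic difference is that for sufficiency you instantiate $\mathcal T$ as the indiscrete topology (forcing $f(x)=f(x_t^a)$ for all $x\in X$), whereas the paper takes $\mathcal T=\mathcal S$ and uses that $\sigma(x_0)$ is the smallest neighbourhood of $x_0$; both yield the same single-coordinate invariance and the same finite iteration across $\sigma(x)$.
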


\begin{proof}
{\it Necessity.} Fix a topology $\mathcal T$ on $X$ and  consider the partition $(\sigma(x_i):i\in I)$ of the set $X$ on $\mathcal S$-components $\sigma(x_i)$. We notice that $f|_{\sigma(x_i)}=y_i$, where $y_i\in Y$ for all $i\in I$, since $f$ is continuous on $(X,\mathcal S)$. Let $a=(a_t)_{t\in T}\in X$ and $t\in T$. If $x\in X$, then $x\in\sigma(x_i)$ for some $i\in I$. Moreover, $x_{t}^a\in\sigma(x_i)$. Then $f(x)=f(x_{t}^a)=y_i$. Hence, $d(f(x),f(x_{t}^a))=0$ for all $x\in X$. Hence, $f$ is strongly separately continuous on $(X,\mathcal T)$.

{\it Sufficiency.} Put $\mathcal T=\mathcal S$. Fix $x_0\in X$ and show that $f$ is continuous at $x_0$ on $(X,\mathcal S)$. Let $x_0\in\sigma(x_i)$ for some $i\in I$.  Since $f$ is strongly separately continuous at $x_0$ and $\sigma(x_0)=\sigma(x_i)$,  we have $d(f(x),f(x_{t}^{x_0}))=0$ for all $x\in\sigma(x_i)$ and $t\in T$. Consequently, $f(x)=f(x_0)$ for all $x\in \sigma(x_i)$. Since the set $\sigma(x_i)$ is open in $(X,\mathcal S)$, $f$ is continuous at $x_0$.
\end{proof}

Now we give a necessary and sufficient condition for a strongly separately continuous mapping to be continuous.

\begin{theorem}\label{th:suff-cond}
  Let $X\subseteq \prod\limits_{t\in T}X_t$ be an $\mathcal S$-open subset of a product of topological spaces $X_t$, $\mathcal T$ be a topology on $X$ such that $(X,\mathcal T)$ is a locally projectively symmetric space, $(Y,d)$ be a metric space and let $f:(X,\mathcal T)\to Y$ be a strongly separately continuous mapping at the point  $a=(a_t)_{t\in T}\in X$. Then   $f$ is continuous at the point $a$ if and only if
 \begin{gather}
 \begin{gathered}
  \mbox{for every\,\,\,} \varepsilon>0\mbox{\,\,\,there exist a set\,\,\,} T_0\subseteq T \mbox{\,\,\,with\,\,\,}|T_0|<\aleph_0\\
   \mbox{\,\,\, and a neighborhood\,\,\,} U \mbox{\,\,\,of\,\,\,} a \mbox{\,\,\,in\,\,\,} (X,\mathcal T) \mbox{\,\,\,such that}\\
 d(f(a),f(x_{T_0}^{a}))<\varepsilon \mbox{\,\,\, for all\,\,\,} x\in U.
\end{gathered}\label{eq:suff-cond}
 \end{gather}
\end{theorem}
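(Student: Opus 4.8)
The plan is to establish the two implications separately; the forward direction is immediate, so the substance of the argument lies in the converse, which I would prove by a finite telescoping chain combined with local projective symmetry. For \emph{necessity} I would simply unwind the definitions: if $f$ is continuous at $a$, then for a given $\varepsilon>0$ there is a neighborhood $U$ of $a$ with $d(f(x),f(a))<\varepsilon$ for all $x\in U$, and choosing the (finite) set $T_0=\emptyset$ gives $x_{T_0}^{a}=x$, so \eqref{eq:suff-cond} holds verbatim.

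For \emph{sufficiency} I would fix $\varepsilon>0$ and extract from \eqref{eq:suff-cond} a finite set $T_0=\{t_1,\dots,t_n\}$ and a neighborhood $U$ of $a$ with $d(f(a),f(x_{T_0}^{a}))<\varepsilon/2$ for all $x\in U$. Since the point $x_{T_0}^{a}$ differs from $x$ only in the finitely many coordinates indexed by $T_0$, the idea is to travel from $x$ to $x_{T_0}^{a}$ by resetting these coordinates to their $a$-values one at a time, controlling each single step by strong separate continuity. Concretely, I set $z_0=x$ and $z_k=(z_{k-1})_{t_k}^{a}$ for $k=1,\dots,n$, so that $z_n=x_{T_0}^{a}$, and estimate $d(f(x),f(x_{T_0}^{a}))\le\sum_{k=1}^{n}d(f(z_{k-1}),f(z_k))$ by the triangle inequality.

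The main obstacle is ensuring that every intermediate point $z_{k-1}$ lands in a neighborhood on which the $k$-th term is small, and this is precisely where local projective symmetry enters. Strong separate continuity with respect to $t_k$ supplies, for the tolerance $\varepsilon/(2n)$, a neighborhood $W_k$ of $a$ with $d(f(z),f(z_{t_k}^{a}))<\varepsilon/(2n)$ for every $z\in W_k$; but a priori $z_{k-1}$ depends on $x$ and may leave $W_k$. I would therefore pass to a projectively symmetric neighborhood $V\subseteq U\cap\bigcap_{k=1}^{n}W_k$ of $a$, which exists because $(X,\mathcal T)$ is locally projectively symmetric. A one-line induction then shows that $x\in V$ forces every $z_k\in V$: indeed $z_0=x\in V$, and if $z_{k-1}\in V$ then $z_k=(z_{k-1})_{t_k}^{a}\in V$ by projective symmetry with respect to $a$; this also keeps the whole chain inside $X$, so the points $z_k$ are legitimate. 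Consequently each $z_{k-1}\in V\subseteq W_k$, every summand is below $\varepsilon/(2n)$, and hence $d(f(x),f(x_{T_0}^{a}))<\varepsilon/2$. Since moreover $x\in V\subseteq U$, the choice of $U$ yields $d(f(x),f(a))\le d(f(x),f(x_{T_0}^{a}))+d(f(x_{T_0}^{a}),f(a))<\varepsilon$, which is exactly continuity of $f$ at $a$.
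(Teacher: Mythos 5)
Your proposal is correct and follows essentially the same argument as the paper: the trivial $T_0=\emptyset$ choice for necessity, and for sufficiency the same telescoping chain $z_k=x_{\{t_1,\dots,t_k\}}^{a}$ with each step controlled by strong separate continuity and kept inside a common projectively symmetric neighborhood by the same one-line induction. The only (inessential) omission is the degenerate case $T_0=\emptyset$ in the sufficiency direction, which the paper dispatches separately and which your empty sum handles anyway.
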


\begin{proof} {\sc Necessity}. Suppose $f$ is continuous at the point $a$ and $\varepsilon>0$. Take a neighborhood $U$ of $a$ such that $d(f(x),f(a))<\varepsilon$ for all $x\in U$ and put $T_0=\emptyset$. Then $x_{T_0}^a=x$, which implies condition (\ref{eq:suff-cond}).

{\sc Sufficiency}.  Fix $\varepsilon>0$. Using the condition of the theorem we take a finite set $T_0\subseteq T$ and a neighborhood $U$ of $a$ in $(X,\tau)$ such that $$
  d(f(a),f(x_{T_0}^{a}))<\frac{\varepsilon}{2}
  $$
  for  every $x\in U$. If $T_0=\emptyset$, then  $d(f(x),f(a))<\varepsilon$ for all $x\in U$. Now assume $T_0=\{t_1,\dots,t_n\}$.  Since $f$ is strongly separately continuous at $a$, for every $k=1,\dots,n$ we choose a neighborhood $V_k$ of the point $a$ such that
  $$
  d(f(x),f(x_{t_k}^{a})<\frac{\varepsilon}{2n}
  $$
  for all $x\in V_k$. We take a projectively symmetric with respect to the point $a$ neighborhood $W$ of $a$ such that
  $$
  W\subseteq U\cap\bigl(\bigcap\limits_{k=1}^n V_k\bigr).
  $$
Since $W$ is projectively symmetric set with respect to $a$, we can show inductively that $x_{\{t_1,\dots,t_k\}}^a\in W$ for every $k=1,\dots,n$ and  for every $x\in W$.  Then for all $x\in W$ we have
\begin{gather*}
  d(f(x),f(a))\le d(f(x),f(x_{T_0}^a))+d(f(x_{T_0}^a),f(a))<\\
  <d(f(x),f(x_{t_1}^a))+d(f(x_{t_1}^a),f(x_{\{t_1,t_2\}}^a))+\dots+\\
  +d(f(x_{\{t_1,\dots,t_{n-1}\}}^a),f(x_{\{t_1,\dots,t_n\}}^a))+\frac{\varepsilon}{2}<\\
  <\frac{\varepsilon}{2n}\cdot n+\frac{\varepsilon}{2}=\varepsilon.
\end{gather*}
Hence, $f$ is continuous at the point $a$.
\end{proof}

We notice that the similar condition given by Visnyai in \cite{TV} for functions defined on $\ell_2$ is stronger than
condition~(\ref{eq:suff-cond}).

The following corollary generalizes the result of Dzagnidze~\cite[Theorem 2.1]{Dzagnidze}.

\begin{corollary}\label{cor:finite-dim-strong-cont}
  Let $X$ be an $\mathcal S$-open subset of a product $\prod\limits_{t\in T} X_t$ of topological spaces $X_t$, $|T|<\aleph_0$ and $(Y,d)$ be a metric space. Then any strongly separately continuous mapping \mbox{$f:(X,\tau)\to Y$} is continuous.
\end{corollary}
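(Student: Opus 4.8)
The plan is to obtain the corollary as an immediate consequence of Theorem~\ref{th:suff-cond}. First I would check that the hypotheses of that theorem hold with $\mathcal T=\tau$. Since $X$ is an $\mathcal S$-open subset of $\prod_{t\in T}X_t$, it was already observed in Section~3 that any such subset equipped with the Tychonoff topology is a locally projectively symmetric space, so $(X,\tau)$ qualifies. Hence, for a strongly separately continuous mapping $f:(X,\tau)\to Y$ into a metric space $(Y,d)$, continuity at a point $a$ is \emph{equivalent} to the criterion (\ref{eq:suff-cond}), and it suffices to verify that criterion at every point.

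Next I would fix $a=(a_t)_{t\in T}\in X$ and an arbitrary $\varepsilon>0$, and verify (\ref{eq:suff-cond}) at $a$. Here the finiteness of $T$ does all the work: I would take $T_0=T$, which is a finite subset of $T$ since $|T|<\aleph_0$, together with the neighborhood $U=X$. With this choice the point $x_{T_0}^a=x_T^a$ coincides with $a$ in every coordinate, that is $x_T^a=a$ for each $x\in X$, whence $d(f(a),f(x_{T_0}^a))=d(f(a),f(a))=0<\varepsilon$ for all $x\in U$. Thus (\ref{eq:suff-cond}) is fulfilled.

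Theorem~\ref{th:suff-cond} then gives continuity of $f$ at $a$, and since $a$ was arbitrary, $f$ is continuous on $X$. The only detail demanding attention is the direction of the notation $x_{T_0}^a$: one has to make sure that overriding every coordinate in $T_0=T$ by the corresponding coordinate of $a$ reproduces the point $a$, which is exactly what forces the distance in (\ref{eq:suff-cond}) to vanish. Apart from this bookkeeping there is no real obstacle; all the substance resides in Theorem~\ref{th:suff-cond}, and the finiteness of $T$ renders its finite-set requirement vacuously satisfiable.
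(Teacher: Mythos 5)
Your proof is correct and is essentially identical to the paper's: the paper likewise verifies condition~(\ref{eq:suff-cond}) by taking $T_0=T$ and $U=X$, observes that $x_{T_0}^a=a$ so the distance vanishes, and invokes Theorem~\ref{th:suff-cond} together with the fact that an $\mathcal S$-open set with the Tychonoff topology is locally projectively symmetric. No differences worth noting.
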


\begin{proof}
  Fix an arbitrary point $a\in X$ and a strongly separately continuous mapping \mbox{$f:(X,\tau)\to Y$}. It is easy to see that $f$  satisfy condition~(\ref{eq:suff-cond}). Indeed, for $\varepsilon>0$ we put $T_0=T$ and $U=X$. Then for all $x\in U$ we have $x_{T_0}^a=a$ and consequently $d(f(a),f(x_{T_0}^a))=0<\varepsilon$. Hence, $f$ is continuous at $a$ by Theorem~\ref{th:suff-cond}, provided $(X,\tau)$ is a locally projectively symmetric space.
\end{proof}

The proposition below shows that Corollary~\ref{cor:finite-dim-strong-cont} is not valid for a product of  infinitely many topological spaces.

\begin{proposition}\label{prop:everywhere_discont}
  Let $X=\prod\limits_{t\in T}X_t$ be a product of topological spaces $X_t$, where $|X_t|>1$ for every $t\in T$, let $|T|> \aleph_0$ and $(Y,d)$ be a metric space with $|Y|>1$. Then there exists a strongly separately continuous everywhere discontinuous mapping $f:(X,\tau)\to Y$.
\end{proposition}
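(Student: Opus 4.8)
The plan is to reduce everything to Theorem~\ref{prop:strong-top-s}, which guarantees that \emph{any} map continuous on $(X,\mathcal S)$ is strongly separately continuous on $(X,\tau)$. Thus it suffices to exhibit a two-valued function $f\colon X\to Y$ that is constant on each $\mathcal S$-component, and then to derive everywhere-discontinuity purely from the fact that nonempty $\mathcal S$-open sets are $\tau$-dense (Proposition~\ref{prop:simple_prop_s-open}(\ref{s_open_dense})).

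Concretely, I would first fix distinct points $y_0,y_1\in Y$ (available since $|Y|>1$) and put $\varepsilon_0=d(y_0,y_1)>0$. Because $|T|>\aleph_0$, the set $T$ is infinite, so altering infinitely many coordinates of a given point (each $X_t$ has a second point) yields a point in a different $\mathcal S$-component; hence $X$ has at least two $\mathcal S$-components. I would single out one of them, $C=\sigma(x^\ast)$, and define $f\equiv y_1$ on $C$ and $f\equiv y_0$ on $X\setminus C$.

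The two verifications are then immediate. For strong separate continuity: for every open $V\subseteq Y$ the set $f^{-1}(V)$ is a union of $\mathcal S$-components, hence a union of $\mathcal S$-open sets and so $\mathcal S$-open; therefore $f\colon(X,\mathcal S)\to Y$ is continuous, and the necessity part of Theorem~\ref{prop:strong-top-s} forces $f\colon(X,\tau)\to Y$ to be strongly separately continuous. For everywhere-discontinuity: $C$ is $\mathcal S$-open by Proposition~\ref{prop:simple_prop_s-open}(\ref{prop:componenta}) and $X\setminus C$ is $\mathcal S$-open and nonempty by Proposition~\ref{prop:simple_prop_s-open}(\ref{S-Clopen}), so both preimages $f^{-1}(y_1)=C$ and $f^{-1}(y_0)=X\setminus C$ are $\tau$-dense. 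Given any $a\in X$, the value $f(a)$ is one of $y_0,y_1$, and every $\tau$-neighborhood of $a$ meets the (dense) preimage of the other value, producing a point $x$ with $d(f(x),f(a))=\varepsilon_0$; hence the oscillation of $f$ at $a$ is at least $\varepsilon_0$ and $f$ is discontinuous at $a$.

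I do not expect a genuine obstacle here: the whole difficulty is the single observation that constancy on $\mathcal S$-components already yields strong separate continuity through Theorem~\ref{prop:strong-top-s}, after which density of $\mathcal S$-open sets converts two-valuedness into discontinuity at every point with no oscillation estimates or explicit neighborhood bookkeeping. The only place the cardinality hypothesis is used is to ensure that at least two $\mathcal S$-components exist, and indeed the argument works verbatim whenever $T$ is infinite.
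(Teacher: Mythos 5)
Your proposal is correct and follows essentially the same route as the paper: a two-valued function that is $y_1$ on a single $\sigma$-product $\sigma(x^\ast)$ and $y_0$ on its complement, with strong separate continuity obtained from Theorem~\ref{prop:strong-top-s} (continuity on $(X,\mathcal S)$) and everywhere-discontinuity from the $\tau$-density of both $\mathcal S$-open preimages. Your closing remark that the construction needs only $|T|\ge\aleph_0$ is also consistent with Proposition~\ref{prop:simple_prop_s-open}.
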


\begin{proof}
  Fix $x_0\in X$ and $y_1,y_2\in Y$, $y_1\ne y_2$. According to Proposition~\ref{prop:simple_prop_s-open}, $\sigma(x_0)\ne\emptyset\ne X\setminus\sigma(x_0)$.  Set $f(x)=y_1$ if $x\in\sigma(x_0)$ and $f(x)=y_2$ if $x\in X\setminus \sigma(x_0)$. Then $f$ is everywhere discontinuous on $X$. Indeed, let $a\in X$  and $f(a)=y_1$. Take an open neighborhood $V$ of $y_1$ such that $y_2\not\in V$. If $U$ is an arbitrary neighborhood of $a$ in $(X,\tau)$, then there is $x\in U\setminus\sigma(x_0)$. Therefore, $f(x)=y_2\not\in V$ and $f$ is discontinuous at $a$. Similarly, $f$ is discontinuous at $a$ in the case $f(a)=y_2$.

   Since the set $\sigma(x_0)$ is clopen in $(X,\mathcal S)$, the mapping $f:(X,\mathcal S)\to Y$ is continuous. It remains to apply Theorem~\ref{prop:strong-top-s}.
\end{proof}

\section{A necessary condition on the discontinuity point set}

Let $f:X\to Y$  a mapping between topological spaces. By $C(f)$  we denote the continuity point set of $f$ and by $D(f)$ we denote the discontinuity point set of $f$.

Let ${\mathcal U}_x$ be a system of all neighborhoods of a point $x$ in $X$. For a mapping $f:X\to (Y,d)$ we set
$$
\omega_f(A)=\mathop{\sup}\limits_{x',x''\in A} d(f(x'),f(x''))\quad\mbox{and}\quad \omega_f(x)=\mathop{{\rm inf}}\limits_{U\in{\mathcal U}_x} \omega_f(U).
$$

\begin{definition}
A set $W\subseteq \sigma(a)$ is called {\it nearly open in $(\sigma(a),\tau)$} if for any finite set $T_0\subseteq T$ the set
  $$
  W_{T_0}=\{z\in\prod\limits_{t\in T_0}X_t: a_{T_0}^z\in W\}
  $$
is open in the space $(\prod\limits_{t\in T_0}X_t,\tau)$.
\end{definition}

\begin{theorem}\label{th:necessary_cond}
  Let $(X_t:t\in T)$ be a family of topological spaces, $X=\prod\limits_{t\in T}X_t$, $a=(a_t)_{t\in T}\in X$, $(Y,d)$ be a metric space and $f:(\sigma(a),\tau)\to Y$ be a strongly separately continuous mapping. Then the discontinuity point set $D(f)$ of $f$ is nearly open in $(\sigma(a),\tau)$.
\end{theorem}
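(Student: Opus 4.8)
The plan is to fix a finite set $T_0\subseteq T$ and show that $D(f)_{T_0}$ is open; since $T_0$ is arbitrary, this is exactly the assertion that $D(f)$ is nearly open. So I take a point $z_0\in D(f)_{T_0}$, i.e. a point $b_0:=a_{T_0}^{z_0}$ at which $f$ is discontinuous, write $c:=\omega_f(b_0)>0$, fix $\varepsilon:=c/6$, and look for a basic neighbourhood $V$ of $z_0$ in $\prod_{t\in T_0}X_t$ with $a_{T_0}^z\in D(f)$ for every $z\in V$. The whole argument amounts to transporting the oscillation $c$ from $b_0$ to each nearby slice point $a_{T_0}^z$.

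The first step is the finite telescoping estimate already underlying Theorem~\ref{th:suff-cond}. Using strong separate continuity of $f$ at $b_0$ with respect to each of the finitely many variables $t\in T_0$, together with the fact that a basic Tychonoff neighbourhood of $b_0$ is automatically projectively symmetric with respect to $b_0$, I obtain a basic neighbourhood $U=\{x:x_t\in G_t,\ t\in E\}$ of $b_0$, with $T_0\subseteq E$, such that $d(f(x),f(x_{T_0}^{z_0}))<\varepsilon$ for all $x\in U$. Setting $V:=\prod_{t\in T_0}G_t$, every $x\in U$ has its $T_0$-block in $V$, and replacing the $T_0$-block of any $x\in U$ by an arbitrary $z\in V$ keeps the point inside $U$.

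The key step, and the main obstacle, is to upgrade this pointwise estimate to \emph{uniform} control of $f$ along the finite $T_0$-block. The naive route, invoking continuity of $f$ on each finite slice (Corollary~\ref{cor:finite-dim-strong-cont}) fibre by fibre, yields only control with a modulus depending on the fibre, which is useless for an oscillation estimate ranging over a continuum of fibres. The trick that removes this difficulty is to apply the telescoping estimate not at $x$ itself but at the shifted point $q_{T_0}^z$: since resetting its $T_0$-block to $z_0$ returns exactly $q_{T_0}^{z_0}$, the estimate gives $d(f(q_{T_0}^z),f(q_{T_0}^{z_0}))<\varepsilon$ for \emph{all} $q\in U$ and all $z\in V$. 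Hence $d(f(q_{T_0}^z),f(q_{T_0}^w))<2\varepsilon$ for all $q\in U$ and $z,w\in V$; in particular, taking $w$ to be the $T_0$-block of $q$ yields $d(f(q_{T_0}^z),f(q))<2\varepsilon$ uniformly over $q\in U$.

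Finally I transport the oscillation. Fix $z\in V$ and an arbitrary neighbourhood $U'$ of $b_z:=a_{T_0}^z$, which I may assume lies inside $U$. Replacing in $U'$ the constraints on the $T_0$-coordinates around $z$ by the $U$-constraints around $z_0$ produces a neighbourhood $U''$ of $b_0$ with $U''\subseteq U$, so $\omega_f(U'')\ge\omega_f(b_0)=c$ and there are $x',x''\in U''$ with $d(f(x'),f(x''))>c-\varepsilon$. The shifted points $(x')_{T_0}^z$ and $(x'')_{T_0}^z$ then lie in $U'$, and by the uniform estimate each is within $2\varepsilon$ of $f(x')$, respectively $f(x'')$, whence $d(f((x')_{T_0}^z),f((x'')_{T_0}^z))>c-5\varepsilon$. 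As $U'$ was an arbitrary neighbourhood of $b_z$, this gives $\omega_f(b_z)\ge c-5\varepsilon=c/6>0$, so $f$ is discontinuous at $a_{T_0}^z$. Thus $V\subseteq D(f)_{T_0}$, which shows $D(f)_{T_0}$ is open and completes the proof.
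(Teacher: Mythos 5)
Your proof is correct and follows essentially the same route as the paper's: a telescoping estimate over the finitely many coordinates of $T_0$, derived from strong separate continuity at the discontinuity point and the projective symmetry of basic Tychonoff neighbourhoods, yields uniform control of $d(f(q),f(q_{T_0}^{z}))$ on a neighbourhood, and this is then used to transport a positive lower bound on the oscillation from $a_{T_0}^{z_0}$ to every nearby slice point $a_{T_0}^{z}$. The only differences are cosmetic (you telescope to $z_0$ first and compare two blocks by the triangle inequality, and you phrase the oscillation transfer via pairs of points in shrinking neighbourhoods rather than a net), with slightly different numerical constants.
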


\begin{proof} Let $T_0\subseteq T$ be an arbitrary finite set and $Z=\prod\limits_{t\in T_0}X_t$.
  For $z\in Z$ we write $\varphi(z)= a_{T_0}^z$ and $G=(D(f))_{T_0}$.

If $T_0=\emptyset$, then $G=\emptyset$. Now let $T_0=\{t_1,\dots,t_n\}$, $w=(w_t)_{t\in T_0}\in G$, $u=\varphi(w)\in\sigma(a)$ and $\varepsilon=\frac 13\omega_f(u)$. We observe that $\varepsilon>0$, provided $f$ is discontinuous at $u$. Since $f$ is strongly separately continuous at the point $u$, there exists a basic neighborhood $U_0$ of $u$ in $(X,\tau)$ such that
  \begin{gather}\label{gath:th:disc1}
    d(f(x),f(x_{\{t\}}^u))<\frac{\varepsilon}{6n}
  \end{gather}
  for all $t\in T_0$ and $x\in U_0\cap\sigma(a)$. Since the mapping $\varphi:(Z,\tau)\to (\sigma(a),\tau)$ is continuous, there exists a basic neighborhood $W_0$ of $w$ in $(Z,\tau)$ such that $\varphi(W_0)\subseteq U_0$.

We show that
  \begin{gather}\label{gath:th:disc2}
    d(f(x),f(x_{T_0}^{\varphi(z)}))<\frac{\varepsilon}{3}
  \end{gather}
 for any $z\in W_0$ and $x\in U_0\cap\sigma(a)$. Let $x_0=x$ and $x_k=(x_{k-1})_{t_k}^{\varphi(z)}$, $k=1,\dots,n$. Then $x_n=x_{T_0}^{\varphi(z)}$. Moreover, since $x,\varphi(z)\in U_0$, $x_k\in U_0$ for every $k$. It follows from (\ref{gath:th:disc1}) that
   \begin{gather*}
     d(f(x_{k-1}),f((x_{k-1})_{t_k}^u))<\frac{\varepsilon}{6n}\quad\mbox{i}\quad d(f(x_{k}),f((x_{k})_{t_k}^u))<\frac{\varepsilon}{6n}
   \end{gather*}
  for every $k=1,\dots,n$. Taking into account the equality $(x_{k-1})_{t_k}^u=(x_{k})_{t_k}^u$, we obtain
   $$
   d(f(x_{k-1}),f(x_k))<\frac{\varepsilon}{3n}.
   $$
    Hence,
    \begin{gather*}
      d(f(x),f(x_{T_0}^{\varphi(z)}))=d(f(x_0),f(x_n))\le \sum\limits_{k=1}^n d(f(x_{k-1}),f(x_k))<\frac{\varepsilon}{3}.
    \end{gather*}

Now we prove that
$$
\omega_f(\varphi(z))\ge \frac{\varepsilon}{3}
$$
for all $z\in W_0$. Let $z\in W_0$ and $x=\varphi(z)$. Since $\omega_f(u)=3\varepsilon$, there exists a net $(u_\lambda)_{\lambda\in\Lambda}$ of points of $\sigma(a)\cap U_0$ such that  $u_\lambda\mathop{\to}\limits_{\lambda\in\Lambda} u$ in $(X,\tau)$ and $d(f(u_\lambda),f(u))\ge \varepsilon$ for every  $\lambda\in\Lambda$. We notice that $u_{T_0}^x=x$. Therefore, $v_\lambda=(u_\lambda)_{T_0}^x\mathop{\to}\limits_{\lambda\in\Lambda} x$. It follows from (\ref{gath:th:disc2}) that
\begin{gather*}
  d(f(u),f(x))<\frac{\varepsilon}{3}\quad\mbox{i}\quad d(f(u_\lambda),f(v_\lambda))<\frac{\varepsilon}{3}.
\end{gather*}
Hence,
\begin{gather*}
  d(f(x),f(v_\lambda))\ge d(f(u),f(u_\lambda))-d(f(u),f(x))-d(f(u_\lambda),f(v_\lambda))>\frac{\varepsilon}{3},
\end{gather*}
consequently,
$\omega_f(x)\ge\frac{\varepsilon}{3}$. Therefore, $x\in D(f)$. Thus, $W_0\subseteq G$, which implies that  $G$ is open in $(Z,\tau)$.
\end{proof}

\section{A sufficient condition on the discontinuity point set}

The definition of strongly separately continuous mapping easily implies the following properties.

\begin{proposition}\label{th:operations-with-ssc}
  Let $X$ be an $\mathcal S$-open subset of a product $\prod\limits_{t\in T}X_t$ of a family of topological spaces $X_t$ and $\mathcal T$ be a topology on $X$.

  If $f,g:(X,\mathcal T)\to \mathbb R$ are strongly separately continuous mappings at $x_0\in X$, then the mappings
    $f(x)\pm g(x)$, $f(x)\cdot g(x)$, $|f(x)|$, $\min\{f(x),g(x)\}$, $\max\{f(x),g(x)\}$ are strongly separately continuous at  $x_0$.

  If $f(x)=\sum\limits_{n=1}^\infty f_n(x)$ is a sum of a uniformly convergent series of strongly separately continuous at $x_0\in X$ mappings $f_n:(X,\mathcal T)\to \mathbb R$, then  $f$ is strongly separately continuous at  $x_0$.
\end{proposition}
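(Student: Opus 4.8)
The plan is to verify each closure property straight from the metric definition of strong separate continuity. I fix a variable $t\in T$ and abbreviate $\phi(x)=f(x)-f(x_t^{x_0})$ and $\psi(x)=g(x)-g(x_t^{x_0})$, so that the two hypotheses read $\lim_{x\to x_0}\phi(x)=0$ and $\lim_{x\to x_0}\psi(x)=0$. Every assertion then amounts to estimating the corresponding increment of the new function by $|\phi|$ and $|\psi|$.

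The operations $f\pm g$, $|f|$, $\min$ and $\max$ are routine. For $f\pm g$ one has $(f\pm g)(x)-(f\pm g)(x_t^{x_0})=\phi(x)\pm\psi(x)$, and the triangle inequality finishes the job. For the modulus the reverse triangle inequality $\bigl||f(x)|-|f(x_t^{x_0})|\bigr|\le|\phi(x)|$ suffices. For the lattice operations I would use the $1$-Lipschitz estimate $\bigl|\max\{f,g\}(x)-\max\{f,g\}(x_t^{x_0})\bigr|\le\max\{|\phi(x)|,|\psi(x)|\}\le|\phi(x)|+|\psi(x)|$ and its analogue for $\min$; alternatively one may deduce these two cases from the already-settled ones via $\max\{u,v\}=\tfrac12(u+v+|u-v|)$ and $\min\{u,v\}=\tfrac12(u+v-|u-v|)$.

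The product is the only delicate point, and this is where I expect the main obstacle to lie. Starting from the identity
\[
f(x)g(x)-f(x_t^{x_0})g(x_t^{x_0})=f(x)\,\psi(x)+g(x_t^{x_0})\,\phi(x),
\]
each summand is the product of a factor tending to $0$ with a factor whose size must be controlled, so the crux is to bound $f$ and $g$ on some neighbourhood of $x_0$. Strong separate continuity only compares $f(x)$ with $f(x_t^{x_0})$ and does not, on its face, bound the values of $f$; the hard part will therefore be to extract local boundedness from the hypothesis, using that $|f(x)-f(x_t^{x_0})|<1$ on a neighbourhood of $x_0$ together with $g(x_t^{x_0})=g(x)-\psi(x)$. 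Once $f$ and $g(x_t^{x_0})$ are known to be bounded near $x_0$, it follows that $f(x)\psi(x)\to0$ and $g(x_t^{x_0})\phi(x)\to0$, whence the product is strongly separately continuous at $x_0$.

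For the uniformly convergent series $f=\sum_{n}f_n$ I would run a standard three-$\varepsilon$ argument, which needs no boundedness and is the easiest of the three. Fix $\varepsilon>0$ and, by uniform convergence, choose $N$ with $\sup_{z}\bigl|f(z)-S_N(z)\bigr|<\varepsilon/3$, where $S_N=\sum_{n=1}^N f_n$; note that $S_N$ is strongly separately continuous at $x_0$ by the finite-sum case already established. Applying the uniform estimate at $z=x$ and at $z=x_t^{x_0}$ gives
\[
|f(x)-f(x_t^{x_0})|\le\tfrac{2\varepsilon}{3}+|S_N(x)-S_N(x_t^{x_0})|,
\]
and the last term is $<\varepsilon/3$ on a suitable neighbourhood of $x_0$. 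Hence $\lim_{x\to x_0}|f(x)-f(x_t^{x_0})|=0$, and $f$ is strongly separately continuous at $x_0$.
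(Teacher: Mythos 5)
Your arguments for $f\pm g$, $|f|$, $\min$, $\max$ and for the uniformly convergent series are correct and complete; the paper itself offers no proof of this proposition (it merely asserts that the properties follow easily from the definition), so for those parts there is nothing to compare against. The genuine problem sits exactly where you suspected it: the product. You defer the key step --- ``extract local boundedness from the hypothesis'' --- and that step cannot be carried out, because strong separate continuity does not imply local boundedness. Concretely, take $X_t=\mathbb R$ for $t\in\mathbb N$, let $X=\sigma(0)\subseteq\mathbb R^{\mathbb N}$ carry the Tychonoff topology $\tau$, and put $f(x)=g(x)=\sum\limits_{n=1}^{\infty} n x_n$ (a finite sum on $\sigma(0)$). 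Then $f(x)-f(x_t^{a})=t(x_t-a_t)$, so $f$ is strongly separately continuous at every point; but every basic $\tau$-neighbourhood $U$ of $0$ constrains only finitely many coordinates, so one can pick $x\in U$ with $x_1=\delta>0$ fixed and $x_m=M$ arbitrarily large in some unconstrained coordinate $m$, giving $f(x)^2-f(x_1^{0})^2=\delta(\delta+2mM)\to\infty$. Hence $f\cdot f$ is not strongly separately continuous at $0$, and the product assertion is false in the stated generality.

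So your identity $f(x)g(x)-f(x_t^{x_0})g(x_t^{x_0})=f(x)\psi(x)+g(x_t^{x_0})\phi(x)$ is the right decomposition, but it closes only under an additional hypothesis such as local boundedness of $f$ and $g$ at $x_0$ (automatic when $f$ and $g$ are bounded). That hypothesis does hold wherever the paper actually invokes the proposition --- only $\min\{\varphi,1\}$ and a uniformly convergent series of $[0,1]$-valued functions are used --- so nothing downstream is affected; but as a proof of the statement as written, the product case has a gap that cannot be filled, and the honest fix is either to add the boundedness assumption or to record the counterexample.
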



If $(X,\|\cdot\|)$ is a normed space, $a\in X$ and $r>0$, then we write
$$
B(a,r)=\{x\in X:\|x-a\|<r\}\quad\mbox{and}\quad B[a,r]=\{x\in X:\|x-a\|\le r\}.
$$

\begin{theorem}\label{th:prod_unit_balls}
Let $((X_n,\|\cdot\|_n))_{n=1}^\infty$ be a sequence of normed spaces, $a\in \prod\limits_{n=1}^\infty X_n$, $w=(w_n)_{n=1}^\infty\in \sigma(a)$, $(r_n)_{n=1}^\infty$ be a sequence of positive reals and
\begin{gather*}
 W=\Bigl(\prod\limits_{n=1}^\infty B(w_n,r_n)\Bigr)\cap \sigma(a).
\end{gather*}
Then there exists a strongly separately continuous function $f:(\sigma(a),\tau)\to [0,1]$ such that
 \begin{gather*}
  W=D(f)\subseteq f^{-1}(0).
 \end{gather*}
\end{theorem}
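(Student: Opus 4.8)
The plan is to write down an explicit $f$ assembled from two continuous ``gauge'' functions on each factor. For every $n$ I would set
\[
h_n(t)=\max\Bigl\{0,\,1-\tfrac{\|t-w_n\|_n}{r_n}\Bigr\},\qquad
e_n(t)=\min\bigl\{1,\,\max\{0,\,\|t-w_n\|_n-r_n\}\bigr\},
\]
so that $h_n,e_n\colon X_n\to[0,1]$ are continuous, $h_n(t)>0$ exactly when $t\in B(w_n,r_n)$ (with $h_n=0$ on the sphere and outside), while $e_n(t)>0$ exactly when $t\notin B[w_n,r_n]$ (so $e_n=0$ on the closed ball). Since $w\in\sigma(a)$ we have $w_n=a_n$, hence $h_n(a_n)=1$ and $e_n(a_n)=0$, for all but finitely many $n$. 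I then define
\[
f(x)=\sum_{n=1}^{\infty}e_n(x_n)\prod_{k=1}^{n-1}h_k(x_k)\qquad(x\in\sigma(a)).
\]
The key elementary observation is that for every $x$ \emph{at most one} summand is nonzero: the $n$-th summand forces $x_n\notin B[w_n,r_n]$ and $x_k\in B(w_k,r_k)$ for all $k<n$, and these are incompatible for two different $n$. Hence $f$ is well defined with $f(\sigma(a))\subseteq[0,1]$, and, writing $n_0=n_0(x)$ for the least index with $x_{n_0}\notin B(w_{n_0},r_{n_0})$, one gets the closed form $f(x)=e_{n_0}(x_{n_0})\prod_{k<n_0}h_k(x_k)$, with $f(x)=0$ whenever $x\in W$. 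In particular $W\subseteq f^{-1}(0)$.

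Next I would verify that $f$ is strongly separately continuous directly, since the series is not uniformly convergent and Proposition~\ref{th:operations-with-ssc} does not apply. Fix $x$, a coordinate $t$, and put $y'=y_t^{x}$; for $y\in\sigma(a)$ both series reduce to finite sums. Comparing them term by term, the summands with $n<t$ cancel, the summand with $n=t$ equals $\bigl[e_t(y_t)-e_t(x_t)\bigr]\prod_{k<t}h_k(y_k)$, and the summands with $n>t$ add up to $\bigl[h_t(y_t)-h_t(x_t)\bigr]\sum_{n>t}e_n(y_n)\prod_{k<n,\,k\ne t}h_k(y_k)$. The inner sum again has at most one nonzero term and is therefore $\le1$, so
\[
\bigl|f(y)-f(y_t^{x})\bigr|\le\bigl|e_t(y_t)-e_t(x_t)\bigr|+\bigl|h_t(y_t)-h_t(x_t)\bigr|\xrightarrow[y\to x]{}0,
\]
because $y_t\to x_t$ and $e_t,h_t$ are continuous. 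Thus $f$ is strongly separately continuous on $\sigma(a)$.

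Finally I would compute $D(f)$. For $x\in W$, every basic $\tau$-neighbourhood restricts only finitely many coordinates; choosing an index $m$ not among the restricted ones and larger than every $k$ with $x_k\ne w_k$, and replacing $x_m$ by a point with $e_m=1$, produces a point of the neighbourhood whose $f$-value is $\prod_{k<m}h_k(x_k)=\prod_{k=1}^{\infty}h_k(x_k)>0$; since $f(x)=0$ this gives $\omega_f(x)>0$, so $x\in D(f)$. For $x\notin W$ take $n_0$ as above. If $x_{n_0}\notin B[w_{n_0},r_{n_0}]$, then on a small neighbourhood controlling the coordinates $1,\dots,n_0$ the index $n_0(\cdot)$ stays equal to $n_0$ and $f$ coincides with the continuous, tail-independent function $e_{n_0}(\cdot)\prod_{k<n_0}h_k(\cdot)$, so $f$ is continuous at $x$. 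If instead $\|x_{n_0}-w_{n_0}\|_{n_0}=r_{n_0}$, then $f(x)=0$, and for a nearby $x'$ either its $n_0$-th coordinate has left the ball, forcing $f(x')\le e_{n_0}(x'_{n_0})$ small, or it has stayed inside, in which case the gate $\prod_{k<n_0(x')}h_k(x'_k)$ carries the small factor $h_{n_0}(x'_{n_0})$; either way $f(x')\to0$, so $f$ is continuous at $x$. Hence $D(f)=W\subseteq f^{-1}(0)$.

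The main obstacle is the continuity of $f$ at the sphere points $x\notin W$ with $\|x_{n_0}-w_{n_0}\|_{n_0}=r_{n_0}$: these are simultaneously limits of points of $W$ (where $f=0$) and of escaped points, and the construction is arranged precisely so that the gate $\prod_{k<n}h_k$, carrying the vanishing factor $h_{n_0}$, suppresses the contribution of any later coordinate that is sent far out. This is exactly what pins the discontinuity set down to $W$ rather than its closure, and it is also the reason for ordering the factors by the well-order of $\mathbb N$ and tracking the first coordinate that leaves its ball.
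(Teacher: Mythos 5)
Your proof is correct and follows essentially the same strategy as the paper's: both arguments decompose $\sigma(a)\setminus W$ according to the first coordinate that leaves its ball, build $f$ from a gauge on the first $n$ coordinates that vanishes as any of them approaches its sphere (so that pushing a far, unrestricted coordinate out of its ball witnesses discontinuity exactly at points of $W$), and verify strong separate continuity by a coordinatewise modulus estimate. The only real difference is cosmetic: you use the product of one-dimensional tent functions $e_n\prod_{k<n}h_k$ where the paper uses the distance $d_n(p_n(x),Y_n\setminus A_n)=\min_{1\le i\le n}\|x_i-S_i\|_i$ (its Claim~2), which lets you skip the normalizing homeomorphism and that distance computation.
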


\begin{proof} Assume without loss of generality that $a=(0,0,\dots)$. We put $X=(\sigma(a),\tau)$ and for every $n\in\mathbb N $ let
\begin{gather*}
  B_{n}=B(w_n,r_n), \quad  Y_n=X_1\times\dots\times X_n \quad\mbox{and}\\
  d_n(x,y)=\max\limits_{1\le i\le n}\|x_i-y_i\|_i \quad\mbox{for all}\,\,\, x,y\in Y_n.
\end{gather*}
 If $x=(x_n)_{n=1}^\infty\in X$ and $k\in\mathbb N$, then $p_k(x)$ stands for $(x_1,\dots,x_k)$.

For every $x=(x_n)_{n=1}^\infty\in X$ we set $h(x)=\bigl(\frac{1}{r_n}(x_n-w_n)\bigr)_{n=1}^\infty$. Then
$h:X\to X$ is a homeomorphism.

For every $n\in\mathbb N$ define
\begin{gather*}
B_n'=\{x\in X_n:\|x\|_n<1\}, \quad S_{n}=\{x\in X_n:\|x\|_n=1\},\\
A_1=X_1\setminus B_1', \quad A_n=\prod\limits_{i=1}^{n-1}B_{i}'\times (X_n\setminus B_{n}')\mbox{\,\,if\,\,}n\ge 2,\\
C_n=(A_n\times \prod\limits_{i=n+1}^\infty X_i)\cap X,\quad  C=\bigsqcup\limits_{n=1}^\infty C_n.
\end{gather*}
We observe that
$$
W'=h(W)=\Bigl(\prod\limits_{n=1}^\infty B_n'\Bigr)\cap X\quad\mbox{and}\quad C=X\setminus W'.
$$

Now for every $n\in\mathbb N$ we consider a function $f_n:X\to\mathbb R$,
$$
f_n(x)=d_n(p_n(x),Y_n\setminus A_n),
$$
and for all $x\in X$ set
$$
  g(x)=\left\{\begin{array}{ll}
             f_n(x), & \mbox{if\,\,} p_n(x)\in A_n\,\,\mbox{for some\,} n\in\mathbb N, \\
             0, & \mbox{otherwise}.
           \end{array}
  \right.
  $$

{\it Claim 1.} {\it The function $g:X\to\mathbb R$ is strongly separately continuous.}

\begin{proof}
Fix $u\in X$ and  $k\in\mathbb N$. For  $x\in X$ we write $y=x_{k}^{u}$ and estimate the difference $|g(x)-g(y)|$.
If $p_n(x)\in A_n$ and $p_m(y)\in A_m$ for some  $n,m\in\mathbb N$, then, using the equality $f_n(y)=f_m(x)=0$ in the case $n\ne m$, we obtain
\begin{gather*}
  |g(x)-g(y)|\le  |f_n(x)-f_n(y)|+|f_m(x)-f_m(y)|\le \\
  \le |d_{n}({p_{n}}(x),{p_{n}}(y))|+|d_{m}(p_{m}(x),p_{m}(y))|\le 2\|x_k-u_k\|_k.
\end{gather*}
If ${p_n}(x)\in A_{n}$ for some $n$ and $y\not\in C$, then $g(y)=f_n(y)=0$ and
\begin{gather*}
  |g(x)-g(y)|\le |d_{n}({p_{n}}(x),{p_{n}}(y))|\le \|x_k-u_k\|_k.
\end{gather*}
The same estimation is valid if $x\not\in C$ and $y\in C$. Finally, if  $x,y\not\in C$, then $|g(x)-g(y)|=0$.
Hence, for all $x\in X$ we have
  \begin{gather*}
      |g(x)-g(y)|\le  2 \|x_k-u_k\|_k,
  \end{gather*}
which implies that
$$
0\le \lim\limits_{x\to u}|g(x)-g(y)|\le 2 \lim\limits_{x\to u} \|x_k-u_k\|_k=0.
$$
Therefore, $g$ is strongly separately continuous at  $u$.
\end{proof}

{\it Claim 2.} {\it The equality
  \begin{gather}\label{eq:dist_between_a_and_compl}
    d_n(u,Y_n\setminus A_n)=\min\limits_{1\le i\le n}\|u_i-S_{i}\|_i.
  \end{gather}
  holds for all $n\in\mathbb N$ and $u=(u_1,\dots,u_n)\in A_n$.}

\begin{proof} Fix $n\in\mathbb N$, $u=(u_1,\dots,u_n)\in A_n$ and let $B=Y_n\setminus A_n$,  $\rho_i=\|u_i-S_{i}\|_i$ if $i=1,\dots,n$, and $\rho=\min\limits_{1\le i\le n}\rho_i$.

We show that $d_n(u,B)\ge \rho$. If $\|u_n\|_n=1$, then $\rho_n=0=\rho$ and the inequality is obvious. Suppose $\|u_n\|_n>1$, choose any  $y=(y_1,\dots,y_n)$ with $d_n(u,y)<\rho$ and check that $y\in A_n$. Notice that $\|u_i-y_i\|_i<\rho_i$ for every $i=1,\dots,n$. Assume that there exists $1\le i\le n-1$ such that $\|y_i\|_i\ge 1$. Then there exists $b\in S_i$ such that $b=u_i+t(y_i-u_i)$ for some $t\in (0,1]$. Then $\|u_i-b\|_i=t\|u_i-y_i\|_i<\rho_i$, a contradiction. Hence, $y_i\in B_{i}'$ for every $i=1,\dots,n-1$. Similarly we can show that $y_n\not\in B_{n}'$. Hence, $y\in A_n$.

Now we show that $d_n(u,B)\le \rho$. Fix $\varepsilon>0$ and assume that $\rho=\rho_i$ for some $1\le i\le n$. If $i<n$, then there exists $v_i\in S_{i}$ with $\|u_i-v_i\|_i<\rho+\varepsilon$. If $i=n$, then, since $\|u_n-S_{n}\|_n=\|u_n-B_{n}'\|_n$, there exists $v_n\in B_{n}'$ such that $\|u_n-v_n\|_n<\rho+\varepsilon$. Let $v_j=u_j$ for $j\ne i$. Then $v=(v_1,\dots,v_n)\in B$ and $d_n(u,v)<\rho+\varepsilon$, which implies that $d_n(u,B)\le\rho$.

Therefore, $d_n(u,B)=\rho$.
\end{proof}

{\it Claim 3.} {\it $W'\subseteq D(g)$.}

\begin{proof}
   Fix $u=(u_1,\dots,u_n,0,\dots)\in W'$. For every $i=1,\dots,n$ we set $\rho_i=\|u_i-S_{i}\|_i$ and $\rho=\min\limits_{1\le i\le n}\rho_i$. Then $\rho\in (0,1]$. For every $i\ge n+1$ we choose $x_i\in X_i$ such that $\|x_i\|_i=1+\rho$ and consider a sequence $(x^m)_{m=1}^\infty$ such that $$x^m=(u_1,\dots,u_n,0,\dots,0,x_{m+n},0,\dots).$$ Clearly, $x^m\in C_{m+n}$ and $x^m\to u$. Since
   $$\|x_{m+n}-z\|_{m+n}\ge |\|x_{m+n}\|_{m+n}-\|z\|_{m+n}|=\rho$$ for all $z\in S_{m+n}$,
   $$
   g(x^m)=\min\{\rho,\|x_{m+n}-S_{m+n}\|_{m+n}\}=\rho.
   $$
Hence, $\lim\limits_{m\to\infty} g(x^m)=\rho>0=g(u)$, which implies the discontinuity of $g$  at~$u$.
\end{proof}

{\it Claim 4.} {\it $C\subseteq C(g)$.}

\begin{proof}
  Fix $u=(u_n)_{n=1}^\infty\in C$  and $\varepsilon>0$. Let  ${p_n}(u)\in A_n$ for some $n\in\mathbb N$ and consider the case $\|u_n\|_n>1$. Since $\psi:Y_n\to\mathbb R$, $\psi(x_1,\dots,x_n)=\min\limits_{1\le i\le n}\|x_i-S_i\|_i$, is continuous at $p={p_n}(u)$, there exists a neighborhood $U=U_1\times\dots\times U_n$ of $p$ in $Y_n$ such that $|\psi(x)-\psi(u)|<\varepsilon$ for all $x\in U$. Let
  $$
  G=\prod\limits_{i=1}^{n-1}(U_i\cap B_i')\times (U_n\cap (X_n\setminus \overline{B_n'}))\times \prod\limits_{i=n+1}^\infty X_i.
  $$
  Then  $|g(x)-g(u)|=|\psi(x)-\psi(u)|<\varepsilon$ for all $x\in G\cap X$.

Now suppose  $\|u_n\|_n=1$. Then $g(u)=f_n(u)=0$. Set
  $$
  V=\prod\limits_{i=1}^{n-1}B_i'\times B(u_n,\varepsilon)\times\prod\limits_{i=n+1}^\infty X_i.
  $$
Let $x\in V\cap X$. If $\|x_n\|_n\le 1$, then  $g(x)=0$, and if $\|x_n\|_n>1$, then  $$g(x)=\min\limits_{1\le i\le n}\|x_i-S_i\|_i\le \|x_n-S_n\|_n\le \|x_n-u_n\|_n<\varepsilon.$$ Hence, $|g(x)-g(u)|<\varepsilon$ for all $x\in V\cap X$.

Therefore, $u\in C(g)$.
\end{proof}

Claim 3 and Claim 4 imply that $D(g)=W'$. Moreover, $g(x)=0$ for all $x\in W'$.

Consider the function $\varphi:X\to\mathbb R$ such that $\varphi(x)=g(h(x))$ for all $x\in X$.

Fix $n\in\mathbb N$ and $u\in X$. Then $h(x_{n}^u)=(h(x))^{h(u)}_{n}$ for every $x\in X$. We have
\begin{gather*}
 \lim\limits_{x\to u} |\varphi(x)-\varphi(x_{n}^u)|= \lim\limits_{x\to u}|g(h(x))-g(h(x_{n}^u))|=\\
  =\lim\limits_{h(x)\to h(u)}|g(h(x))-g((h(x))^{h(u)}_{n})|=0,
\end{gather*}
since $g$ is strongly separately continuous at $h(u)$ with respect to the $n$-th variable. Hence, $\varphi:X\to\mathbb R$ is strongly separately continuous.

If is easy to see that $\varphi$ is continuous at $x\in X$ if and only if $g$ is continuous at $h(x)\in X$. Hence, $D(\varphi)=h^{-1}(D(g))=h^{-1}(W')=W$. Moreover, $\varphi^{-1}(0)=h^{-1}(g^{-1}(0))\supseteq h^{-1}(W')=W$.

Finally, for every $x\in X$ we put
$$
f(x)=\min\{\varphi(x),1\}.
$$
Note that $D(f)=D(\varphi)=W$ and $f(x)=0$ for all $x\in W$.

It remains to observe that $f:X\to [0,1]$ is strongly separately continuous by Proposition~\ref{th:operations-with-ssc}.
\end{proof}

\begin{theorem}\label{th:suff_cond_for_disc}
Let \mbox{$((X_n,\|\cdot\|_n))_{n=1}^\infty$}  be a sequence of finite-dimensional normed spaces, $a\in \prod\limits_{n=1}^\infty X_n$ and $W\subseteq\sigma(a)$ be a nearly open set. Then there exists a strongly separately continuous function $f:(\sigma(a),\tau)\to [0,1]$ such that \mbox{$D(f)=W$}.
\end{theorem}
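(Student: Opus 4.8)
The plan is to reduce the general nearly open set $W$ to the ``box'' case already settled in Theorem~\ref{th:prod_unit_balls}. First I would show that $W$ can be written as a \emph{countable} union $W=\bigcup_{j\ge1}B_j$ of sets of the form $B_j=\bigl(\prod_{n}B(w_n^{(j)},r_n^{(j)})\bigr)\cap\sigma(a)$ with all radii positive and, crucially, with each $B_j\subseteq W$. Granting such a decomposition, Theorem~\ref{th:prod_unit_balls} supplies for every $j$ a strongly separately continuous $f_j\colon(\sigma(a),\tau)\to[0,1]$ with $D(f_j)=B_j$ and $f_j|_{B_j}\equiv0$, and I would set
\[
 f=\sum_{j\ge1}2^{-j}f_j .
\]
Since this series converges uniformly and each summand is strongly separately continuous, $f$ is strongly separately continuous by Proposition~\ref{th:operations-with-ssc}, and $f(\sigma(a))\subseteq[0,1]$.

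For the cover I would use finite-dimensionality decisively. Writing $W_n=\{z\in Y_n:a_{\{1,\dots,n\}}^{z}\in W\}$ (open by near-openness) and $W=\bigcup_n\{w\in W:\{m:w_m\ne a_m\}\subseteq\{1,\dots,n\}\}$, the map $p_n$ identifies the $n$-th piece with $W_n$. As $Y_n$ is finite-dimensional, hence second countable and locally compact, each $W_n$ is a countable union of open product balls $P$ with compact closure $\overline P\subseteq W_n$. For a fixed such $P$ I would produce tail radii $r_{n+1},r_{n+2},\dots>0$ recursively: since $\overline P\times\{a_{n+1}\}$ is a compact subset of the open set $W_{n+1}$, the tube lemma yields $r_{n+1}>0$ with $\overline P\times\overline{B(a_{n+1},r_{n+1})}\subseteq W_{n+1}$; iterating produces compact sets $K_N\subseteq W_N$ with $K_N=K_{N-1}\times\overline{B(a_N,r_N)}$. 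The box $B=\bigl(P\times\prod_{i>n}B(a_i,r_i)\bigr)\cap\sigma(a)$ then satisfies $B\subseteq W$, since any $x\in B$ has finite support, say inside $\{1,\dots,N\}$, whence $p_N(x)\in K_N\subseteq W_N$ and therefore $x=a_{\{1,\dots,N\}}^{p_N(x)}\in W$. Collecting these boxes over all $n$ and all $P$ gives a countable family with union exactly $W$. I expect this step---turning the ``horizontal'' openness of each $W_n$ into a single box with positive radii in the infinitely many tail coordinates---to be the main obstacle, and the place where compactness of closed balls (that is, finite-dimensionality) is indispensable.

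It remains to compute $D(f)$. The inclusion $D(f)\subseteq W$ is immediate: if $x\notin W=\bigcup_jB_j$ then every $f_j$ is continuous at $x$, and a uniformly convergent series of functions continuous at $x$ is continuous at $x$. For the reverse inclusion fix $x\in W$ and put $J_x=\{j:x\in B_j\}\ne\emptyset$. The essential point is that for $j\notin J_x$ the function $f_j$ is \emph{continuous} at $x$ (as $D(f_j)=B_j$), so those summands cannot cancel the jump. To exhibit the jump I would use a ``far coordinate'' perturbation: choose indices $s\to\infty$ beyond the support of $x$ with $\dim X_s>0$ and points $v_s\in X_s$ with $\|v_s-a_s\|_s\to\infty$, and set $x^{(s)}=x_s^{v_s}$, so that $x^{(s)}\to x$ in $(\sigma(a),\tau)$. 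Replaying the computation behind Claim~3 of Theorem~\ref{th:prod_unit_balls}, for each $j\in J_x$ the far coordinate is the first to leave the normalized box, giving $f_j(x^{(s)})\to\rho_j>0$ while $f_j(x)=0$; for $j\notin J_x$ one has $f_j(x^{(s)})\to f_j(x)$ by continuity. Interchanging the limit with the uniformly convergent sum,
\[
 \lim_{s}\bigl(f(x^{(s)})-f(x)\bigr)=\sum_{j\in J_x}2^{-j}\rho_j>0,
\]
so $f$ is discontinuous at $x$. Hence $D(f)=W$; the degenerate situation in which only finitely many $X_n$ are nontrivial reduces, via Corollary~\ref{cor:finite-dim-strong-cont}, to the case $W=\varnothing$, $f\equiv0$.
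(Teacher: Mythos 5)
Your construction is the same as the paper's: write $W$ as a countable union of boxes $B_j=\bigl(\prod_n B(w_n^{(j)},r_n^{(j)})\bigr)\cap\sigma(a)$ contained in $W$ (using second countability of the finite products and the tube--lemma/compactness argument to generate the tail radii), apply Theorem~\ref{th:prod_unit_balls} to each box, and sum $f=\sum_j 2^{-j}f_j$. That part is correct and matches the paper essentially line by line, as does the inclusion $D(f)\subseteq W$.

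The gap is in your proof that $W\subseteq D(f)$. You claim that choosing $v_s\in X_s$ with $\|v_s-a_s\|_s\to\infty$ forces, for every $j\in J_x$, the far coordinate to be ``the first to leave the normalized box'', so that $f_j(x_s^{v_s})\to\rho_j>0$. But $f_j$ sees the perturbation only after rescaling by $r_s^{(j)}$, i.e.\ through $\|v_s-w_s^{(j)}\|_s/r_s^{(j)}$, and nothing in your covering construction bounds the tail radii: the tube lemma only gives \emph{some} $r_s^{(j)}>0$, which may grow faster than $\|v_s\|_s$. In that case the normalized $s$-th coordinate stays inside the unit ball, $f_j(x_s^{v_s})=0$, and the claimed limit fails (indeed the limit need not exist, which also invalidates the term-by-term passage to the limit in the series). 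This is repairable --- either normalize all radii to be $\le 1$ (shrinking preserves $B_j\subseteq W$ and the covering), or fix a single $j_0\in J_x$, choose $v_s$ adapted to the radii of that one box, and use $f_j\ge 0$ together with $f_j(x)=0$ for $j\in J_x$ to get $\liminf_s f(x_s^{v_s})\ge 2^{-j_0}\rho_{j_0}>0$ --- but as written the step does not go through. The paper avoids this entirely by observing that $D(f_m)\subseteq f_m^{-1}(0)$ and $f_m\ge0$ make each $f_m$ lower semi-continuous, and that $D(g_1+g_2)=D(g_1)\cup D(g_2)$ for lower semi-continuous summands, which yields $D(f)=\bigcup_m D(f_m)=W$ with no explicit witnessing sequence. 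Finally, your closing remark about the case of finitely many nontrivial factors is moot under the paper's standing assumption $|X_t|>1$ (and, as stated, would actually contradict the theorem for nonempty nearly open $W$, so it is good that the case is excluded).
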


\begin{proof} Without loss of generality we can assume that $a=(0,0,\dots)$. For every $n\in\mathbb N$ let
\begin{gather*}
Y_n=X_1\times\dots\times X_n, \quad Z_n=Y_n\times\{0\}\times\{0\}\times\dots,\\
G_n=\{(x_1,\dots,x_n)\in Y_n: (x_1,\dots,x_n,0,\dots)\in W\},
\end{gather*}
and $X=(\sigma(0),\tau)$.

Let $x=(x_n)_{n=1}^\infty$  be an arbitrary point of $W$ such that $x_n=0$ for all $n>N$. Since $W$ is nearly open, for every $k=1,\dots,N$ there exists $r_k(x)>0$  such that $F_1=\prod\limits_{k=1}^N B[x_k,r_k(x)]\subseteq G_N$. Since compact set $K_1=F_1\times\{0\}$ is contained in the open in $Y_{N+1}$ set $G_{N+1}$, we can find $r_{N+1}(x)>0$ such that $$F_2=F_1\times B[0,r_{N+1}(x)]\subseteq G_{N+1}.$$ Now the compactness of  $K_2=F_2\times\{0\}\subseteq W|_{Y_{N+2}}$ implies that there exists $r_{N+2}(x)>0$ such that $$F_3=F_2\times B[0,r_{N+2}(x)]\subseteq G_{N+2}.$$ By repeating this process, we obtain a sequence $(r_n(x))_{n=1}^\infty$ of positive reals such that$$
\Bigl(\prod\limits_{n=1}^{\infty} B[x_n,r_n(x)]\Bigr)\cap X\subseteq W.
$$
Now let $W(x)=\Bigl(\prod\limits_{n=1}^{\infty} B(x_n,r_n(x))\Bigr)\cap X$.

Hence, $W=\bigcup\limits_{x\in W}W(x)$. Since for every $n\in\mathbb N$ the family $(W(x)\cap Z_n:x\in W)$ forms an open covering of $V_n=W\cap Z_n$ in  $Z_n$, there exists a countable set $I_n\subseteq W$ such that the family $(W(x)\cap Z_n:x\in I_n)$ is a covering of $V_n$. Put $I=\bigcup\limits_{n=1}^\infty I_n$. Then
\begin{gather*}
W=\bigcup\limits_{n=1}^\infty V_n=\bigcup\limits_{n=1}^\infty\bigcup\limits_{x\in I_n}(W(x)\cap Z_n)=\bigcup\limits_{x\in I}W(x).
\end{gather*}
Let $I=\{x_m:m\in\mathbb N\}$ and $W_m=W(x_m)$.

According to Theorem~\ref{th:prod_unit_balls} there exists a sequence $(f_m)_{m=1}^\infty$ of strongly continuous functions $f_m:X\to [0,1]$ such that $D(f_m)=W_m\subseteq f_m^{-1}(0)$. The last inclusion implies that every  $f_m$ is lower semi-continuous on $X$. For all $x\in X$ we define
$$
f(x)=\sum\limits_{m=1}^\infty \frac{1}{2^m}f_m(x).
$$
Then Proposition~\ref{th:operations-with-ssc} implies that $f:X\to [0,1]$ is strongly separately continuous function. Moreover, $f$ is lower semi-continuous as a sum of uniformly convergent series of lower semi-continuous functions.

Taking into account that   \mbox{$D(g_1+g_2)=D(g_1)\cup D(g_2)$} for any two lower semi-continuous  functions $g_1$ and $g_2$, we obtain $$D(f)=\bigcup\limits_{m=1}^\infty D(f_m)=W.$$
\end{proof}

Combining Theorems~\ref{th:necessary_cond} and \ref{th:suff_cond_for_disc} we obtain
\begin{theorem}\label{th:charact_discont}
  Let $X=\prod\limits_{n=1}^\infty X_n$ be a product of finite-dimensional normed spaces $X_n$, $a\in X$ and $W\subseteq\sigma(a)$. Then $W$ is the discontinuity point set of some strongly separately continuous function $f:(\sigma(a),\tau)\to \mathbb R$ if and only if $W$ is nearly open in $(\sigma(a),\tau)$.
\end{theorem}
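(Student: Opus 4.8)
The plan is to prove the two implications separately, each of which is already packaged as one of the preceding theorems, so the work of this final result is really just a matter of matching hypotheses and choosing the target space.

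For the necessity direction, suppose $W = D(f)$ for some strongly separately continuous $f : (\sigma(a),\tau) \to \mathbb{R}$. Since $\mathbb{R}$ with the usual metric is a metric space, Theorem~\ref{th:necessary_cond} applies verbatim with $Y = \mathbb{R}$, and it yields immediately that $D(f)$ is nearly open in $(\sigma(a),\tau)$. Note that this direction requires no hypothesis on the factors $X_n$ beyond their being topological spaces, so the finite-dimensionality plays no role here. Thus I would simply invoke Theorem~\ref{th:necessary_cond} and observe that $W = D(f)$ is nearly open.

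For the sufficiency direction, suppose $W \subseteq \sigma(a)$ is nearly open. Here I would invoke Theorem~\ref{th:suff_cond_for_disc}, whose hypotheses are precisely that the $X_n$ be finite-dimensional normed spaces and that $W$ be nearly open; its conclusion produces a strongly separately continuous $f : (\sigma(a),\tau) \to [0,1]$ with $D(f) = W$. Since $[0,1] \subseteq \mathbb{R}$, this $f$ is in particular a strongly separately continuous function into $\mathbb{R}$ whose discontinuity point set is exactly $W$, which is what the statement demands.

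The only point that deserves attention, rather than a genuine obstacle, is to record where the finite-dimensionality hypothesis enters: it is needed solely for the sufficiency direction, because the construction underlying Theorem~\ref{th:suff_cond_for_disc} relies on compactness of closed balls in each $X_n$ to build the nested boxes exhausting $W$ from inside. Since both cited theorems are already available, I expect no additional estimates or constructions; the combined proof is essentially the concatenation of the two invocations above, and the main care is simply confirming that the metric-space requirement of Theorem~\ref{th:necessary_cond} and the normed finite-dimensional requirement of Theorem~\ref{th:suff_cond_for_disc} are both met with $Y = \mathbb{R}$.
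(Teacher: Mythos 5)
Your proposal is correct and matches the paper exactly: the paper also obtains this theorem by combining Theorem~\ref{th:necessary_cond} (necessity) with Theorem~\ref{th:suff_cond_for_disc} (sufficiency). Your additional remark about where finite-dimensionality is used is accurate but not needed for the argument.
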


\end{document}